\title{Comparisons of coherent systems under the Time 
	Transformed Exponential model}
\author{Jorge Navarro\\Dpto. Estad\'istica e Investigaci\'on Operativa, Universidad de Murcia
\\ Facultad de Matem\'aticas, Campus de Espinardo \\
30100 Murcia, SPAIN\\
ORCID ID: 0000-0003-2822-915X\\ \texttt{jorgenav@um.es}
\and Julio Mulero \\Dpto. Matemáticas, Universidad de
Alicante \\ Facultad de Ciencias, Apartado de correos 99 \\ 03080
Alicante, SPAIN \\
ORCID ID:  0000-0001-5949-7611 
\\
\texttt{julio.mulero@ua.es}
}
\date{}
\newtheorem{theorem}{Theorem}[section]
\newtheorem{proposition}[theorem]{Proposition}
\newtheorem{definition}[theorem]{Definition}
\newtheorem{corollary}[theorem]{Corollary}
\newtheorem{example}[theorem]{Example}
\newtheorem{remark}[theorem]{Remark}
\numberwithin{equation}{section} \numberwithin{theorem}{section}
\def\ov{\overline}
\begin{document}

\maketitle
\begin{abstract} 
The coherent systems are basic concepts in reliability theory and survival analysis. They contain as particular cases the popular series, parallel and $k$-ou-of-$n$ systems (order statistics). Many results have been obtained for them by assuming that the component lifetimes are independent. In many practical cases, this assumption is unrealistic. In this paper we study them by assuming a Time Transformed Exponential (TTE) model for the joint distribution of the  component lifetimes. This model is equivalent to the frailty model which assumes that they are conditionally independent given a common risk parameter (which represents the common environment risk). Under this model, we obtain explicit expressions for the system reliability functions and comparison results for the main stochastic orders. The system residual lifetime (under different assumptions) is studied as well. 
\\
%
\noindent \textbf{Keywords} Coherent system, TTE model, Archimedean copula, Frailty model, Stochastic orders.\\
\noindent \textbf{Mathematics Subject Classification} 60E15.
\end{abstract}

\newpage
\section{Introduction}

The real engineering or biological  systems can be represented theoretically by using coherent system mathematical structures. They can also be used to represent networks. Thus, a {\it system} is just a Boolean function $\psi:\{0,1\}^n\to\{0,1\}$ which indicates if the system works $\psi(x_1,\dots,x_n)=1$ (or fails $\psi(x_1,\dots,x_n)=0$) when we know the states of the components represented by $x_1,\dots,x_n$. A system $\psi$ is {\it coherent} if $\psi$ is increasing and all the components are relevant for the system.  Its formal definition and basic properties can be found in the classic book \cite{BP75}. 

There exists a wide literature on coherent systems. Many results are obtained by assuming  independent components. However, in practice, sometimes this assumption is unrealistic. Recently, some results have been extended to the case of exchangeable components (by using the concept of signature) and to the general case of arbitrary dependent component lifetimes (by using a copula approach). A recent review of these results can be seen in \cite{N18a}. Additional recent results can be seen in \cite{KA16,PAD17,KN18,N18b} and the references therein.  

The Time Transformed Exponential (TTE) model for a joint distribution 
is an interesting dependence model for the components lifetimes of a system. It is equivalent to assume a frailty model which assumes that they are conditionally independent given a common parameter $\theta$ which represents the risk due to the common environment. The very well known proportional hazard rate model is assumed to measure the influence of parameter $\theta$ in the component reliability functions.
The different dependence models are obtained just by changing the a priori distribution of parameter $\theta$. This model is also equivalent to assume strict Archimedean copulas for the joint distribution. The formal definition of this model is given  in Section 2. For the basic properties of this model we refer the reader to \cite{BS05,N06,MP10,MPR10a,MPR10b} and the references therein.

In this paper we study coherent systems by assuming that their component lifetimes are dependent and satisfy a TTE model. Under this assumption, we provide explicit expressions for the system reliability functions. These expressions are used to compare the different systems or the same system under different dependence models by using the main stochastic orders. Partidular results are obtained in the case of identically distributed (ID) component lifetimes. 
The same is done for the system residual lifetime under different assumptions. Specifically we consider two cases, when at time $t>0$ we just know that the system is working, or when we know that all the components are working.

The rest of the paper is orginized as follows. In Section 2 we introduce the notation and give the basic results needed in the paper (including the formal definition of the TTE model). The main representations for coherent system reliabilities under TTE models are given in Section 3. The stochastic comparisons  are placed in Section 4 and the results for residual lifetimes in Section 5. Illustrative examples are included in Sections 3--5. Finally, some conclusions are given in Section 6.


\section{Preliminary results}

Let $\mathbf{X}=(X_1,\dots,X_n)$ be a nonnegative random
vector (representing the component lifetimes) with joint reliability (or survival) function 
\begin{equation*}
 \mathbf{\ov F}(x_1,\dots,x_n)= \Pr(X_1>x_1,\dots,X_n>x_n),
\end{equation*}
and marginal univariate reliability (survival) functions given by
\[
\ov F_i(x_i)=\mathbf{\ov F} (0,\dots,0,x_i,0,\dots,0)=\Pr(X_i>x_i)
\]
where we assume that $\ov F_i(0)=1$ for $i=1,\dots,n$.

As pointed out in the literature (see, e.g., Nelsen, 2006), the
dependence structure of the random vector $\mathbf{X}$ can be
described by its \textit{survival copula} $K$ which allows us to represent $\mathbf{\ov F}$ as
\begin{equation*}
\mathbf{\ov F}(x_1,\dots,x_n)= K(\ov F_1(x_1),\dots,\ov F_n(x_n))
\end{equation*}
for all $x_1,\dots,x_n$. The function $K$ is a copula, that is,  a continuous multivariate distribution function with uniform marginals over the interval $(0,1)$.
From Sklar's theorem (see Nelsen 2006, p. 47), the survival copula is
unique under the assumption that 
the marginal reliability  functions $\ov F_1,\dots,\ov F_n$ are continuous. 

The representation based on survival copulas is used in reliability and actuarial
sciences instead of that of  the ordinary (distributional) copulas,  since in that areas, reliability or survival functions are commonly studied instead of cumulative
distribution functions. Any copula function can be used both as a survival copula or a distributional copula. However, the probability models obtained can be different.

Among survival copulas, particularly interesting is the class of strict
Archimedean survival copulas. Recall that a copula $K$ is said to be
\textit{strict Archimedean} (see, e.g.,  Nelsen 2006, p. 112), if it can be written as
\begin{equation}\label{2-1}
K(u_1,\dots,u_n)=W(W^{-1}(u_1)+\dots+W^{-1}(u_n)) \ 
\end{equation}
for all  $u_1,\dots,u_n \in [0,1]^n$, for a one-dimensional, continuous, strictly positive,
strictly decreasing and convex  function $W: [0,\infty) \to
(0,1]$ such that $W(0)=1$ and $W(\infty)=0$.

Note that $W$ can be extended to get a reliability function (by adding $W(u)=1$ for $u<0$). The inverse $W^{-1}$ of the function
$W$ is usually called the \emph{generator} of the Archimedean
 copula. As pointed out in  \cite{N06} (see pp. 116--117),   many
standard  copulas (such as the ones in Gumbel, Frank,
Clayton-Oakes and Ali-Mikhail-Haq families) are special cases of this
class. Note that Archimedean copulas are exchangeable (i.e., permutation invariant).

Random vectors of lifetimes having Archimedean survival copulas
are of great interest in reliability and actuarial sciences, but
also in many other applied contexts.  We refer
the reader to \cite{BS05,MS05,N06} and the references therein, for details, properties and applications of Archimedean survival
copulas.

Therefore, if $\textbf{X}=(X_1,\dots,X_n)$
has an strict Archimedean survival copula, then its joint reliability
function $\mathbf{\overline{F}}$ can be written in the form
\begin{equation}\label{survivalfunction}
\mathbf{\ov F}(x_1,\dots,x_n)=W(R_1(x_1)+\dots+R_n(x_n))
\end{equation}
for $n$ suitable continuous and strictly increasing functions
$R_i:[0,+\infty)\rightarrow [0,+\infty)$ given by $R_i(x)=W^{-1}(\ov F_i(x))$ such that
$R_i(0)=0$ and $R_i(\infty)=\infty$ for $i=1,\dots,n$ \cite[see][and the references therein for the details]{BS05}. 
As the Archimedean copulas are exchangeable, then $\textbf{X}$ is exchangeable if and only if (iff) the components are identically distributed (ID), that is, iff $\ov F_1=\dots =\ov F_n$.

This model is also known in the literature as the \emph{Time Transformed Exponential} (TTE) model and it is usually denoted as $\textup{TTE}(W,R_1,\dots,R_n)$ \cite[see, for instance,][]{MP10,MPR10a,MPR10b}. 
It is important to note that, in this model, we have 
\begin{align*}
	\overline{F}_i(x)&=\mathbf{\ov F}(0,\dots,0,x_i,0\dots,0)=W(R_i(x)),\\
	W^{-1}(x)&=R_i(\overline{F}^{-1}_i(x)),\\
	W(x)&=\overline{F}_i(R_i^{-1}(x))
\end{align*}
for $i=1,\dots,n$.
TTE models have been  considered in the  literature as an
appropriate way to describe bivariate lifetimes \cite[see][and the references therein]{BS05}. Their main
characteristic is that they ``separate'', in some sense, dependence
properties (based on $W$) and aging properties (based on $R$). TTE models
include relevant cases of dependent bivariate lifetimes, like
independent or Schur constant laws obtained when
$W(x)=\exp(- x)$ or $R_i(x)=x$, respectively, for $x\geq 0$ and $i=1,\dots,n$.

TTE models  can also be derived  from \emph{frailty models} (see Marshall and Olkin, 1988 or Oakes, 1989). In fact, in the frailty approach, it is assumed that $X_1,\dots,X_n$
are conditionally independent  given some positive (common) random environmental factor
$\Theta$, having conditional marginal reliability  functions $\ov
F_{i}(x|\theta) = \Pr(X_i > x \vert \Theta=\theta) = \ov
H^{\theta}_i(x)$ for some survival functions $\ov H_i$, $i=1,\dots,n$. Note that they satisfy a proportional hazard rate (Cox) model (PHR) with parameter $\theta$.  Thus, for this model, we have
\begin{align*}
\mathbf{\ov F}(x_1,\dots,x_n) & =E[\ov H^{\Theta}_i(x_1)\dots \ov H^{\Theta}_n(x_n)]\\
&=E[\exp(\Theta(\ln \ov H_1(x_1)))\dots\exp(\Theta(\ln \ov H_n(x_n)))]\\
&=E[\exp(-\Theta(-\ln \ov H_1(x_1) - \dots -\ln \ov H_n(x_n)))]\\
&= W(R_1(x_1)+\dots+R_n(x_n)),
\end{align*}
where $W(x)=E[\exp(-\Theta x)]$ and $R_i(x)=-\ln\ov H_i(x)$, $i=1,\dots,n$. In this case, $W(x)$ is the Laplace transform (or  $W(-x)$ is the moment-generating function) associated to the distribution of $\Theta$. For example, in this model, the survival copula is of
Clayton-Oakes type iff the random parameter $\Theta$ has  a Gamma distribution, that is, when $W(x)=b^a(b+x)^{-a}$ for $x\geq 0$ and  $a,b>0$.

Throughout the paper, we say that a function $g$ is increasing (decreasing)  if $g(x)\leq g(y)$ ($\geq$) for all $x\leq y$. We say that $g>0$ is log-convex (log-concave) if $\ln g$ is convex (concave).

\section{Coherent systems under TTE models}

Let $T$ be the lifetime of a coherent (or semi-coherent) system with structure function $\psi$ and $n$ possibly dependent components having (nonnegative)
lifetimes $X_1,\dots, X_n$.  It is well known (see, e.g., Barlow and Proschan, 1975, p.\ 12) that $T$ can be written as $T=\psi(X_1,\dots,X_n)$ where
\begin{equation}\label{mps}
\psi(x_1,\dots,x_n)=\max_{i=1,\dots,r}\min_{j\in P_i}x_j
\end{equation}
and $P_1,\dots,P_r$ are the \emph{minimal path sets} of the system. A \textit{path set} is a set of indices $P$ such that the system works when all the components in $P$ work. A path set is \textit{minimal} if it does not contain other path sets. For example, the series system $X_P=\min_{j\in P}X_j$ has a unique minimal path set $P_1=P$ and the parallel system  $X^P=\max_{j\in P}X_j$ has $r=|P|$ minimal path sets given by $\{ i\}$ for $i\in P$.

From now on we assume that the random vector with the component lifetimes $\mathbf{X}=(X_1,\dots, X_n)$ satisfies the TTE model, that is, that  its joint reliability (survival) function can be written as in \eqref{survivalfunction} for some  strictly increasing functions
$R_i:[0,+\infty)\rightarrow [0,+\infty)$ such that
$R_i(0)=0$ and  $R_i(\infty)=\infty$, and a continuous, strictly positive,
strictly decreasing and convex function $W:[0,\infty) \to
(0,1]$ such that $W(0)=1$ and $W(\infty)=0$.  
Then the system reliability can be obtained as follows.

\begin{proposition}\label{mainresult}
Let $T=\psi(X_1,\dots, X_n)$ be the lifetime of a coherent system with minimal path sets $P_1,\dots, P_r$ and with
component lifetimes $(X_1,\dots, X_n)\sim \textup{TTE}(W,R_1,\dots,R_n)$.  Then the reliability function of  $T$ can be written as
\begin{equation}\label{RT}
\overline{F}_{T}(t)=\sum_{i=1}^{r} W\left(\sum_{k\in P_i}R_k(t)\right)-\sum_{i=1}^{r-1}\sum_{j=i+1}^{r} W\left(\sum_{k\in P_i\cup P_j}R_k(t)\right)+\ldots
+ (-1)^{r+1} W\left(\sum_{k\in P_1\cup\dots\cup P_r}R_k(t)\right) 
\end{equation}
and its probability density function as
\begin{align}\label{fT}
	f_{T}(t)&=
	 -\sum_{i=1}^{r} W'\left(\sum_{k\in P_i}R_k(t)\right)\sum_{k\in P_i}R'_k(t)+\sum_{i=1}^{r-1}\sum_{j=i+1}^{r} W'\left(\sum_{k\in P_i\cup P_j}R_k(t)\right)\sum_{k\in P_i\cup P_j}R'_k(t)+\dots\nonumber \\
	&\quad + (-1)^{r} W'\left(\sum_{k\in P_1\cup\dots\cup P_r}R_k(t)\right)\sum_{k\in P_1\cup\dots\cup P_r}R'_k(t)
	\end{align}
whenever $R_1,\dots,R_n$ and $W$ are differentiable.
\end{proposition}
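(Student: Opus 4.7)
The plan is to combine the minimal path set representation \eqref{mps} with the inclusion-exclusion principle, then evaluate each joint survival probability that appears by using the TTE form \eqref{survivalfunction}; the density formula will follow by term-by-term differentiation. First I would rewrite the event $\{T>t\}$: by \eqref{mps}, the system is alive at $t$ iff at least one minimal path set $P_i$ has all its components alive at $t$, so setting $A_i=\{X_j>t\text{ for all }j\in P_i\}$ we obtain $\{T>t\}=A_1\cup\dots\cup A_r$.

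Next I would apply inclusion-exclusion to $\Pr(A_1\cup\dots\cup A_r)$, giving
\[
\overline{F}_T(t)=\sum_{m=1}^{r}(-1)^{m+1}\sum_{1\le i_1<\dots<i_m\le r}\Pr\!\left(A_{i_1}\cap\dots\cap A_{i_m}\right).
\]
The intersection $A_{i_1}\cap\dots\cap A_{i_m}$ is exactly the event that every component indexed by $S:=P_{i_1}\cup\dots\cup P_{i_m}$ survives past $t$. For any non-empty $S\subseteq\{1,\dots,n\}$, the TTE form \eqref{survivalfunction} together with the boundary condition $R_k(0)=0$ gives
\[
\Pr(X_k>t\text{ for all }k\in S)=\mathbf{\overline{F}}(y_1,\dots,y_n)\Big|_{y_k=t\ (k\in S),\ y_k=0\ (k\notin S)}=W\!\left(\sum_{k\in S}R_k(t)\right),
\]
because the terms with $k\notin S$ drop out of the sum inside $W$. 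Substituting $S=P_{i_1}\cup\dots\cup P_{i_m}$ into each summand of the inclusion-exclusion expansion recovers \eqref{RT}.

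For the density \eqref{fT}, under the stated differentiability I would simply compute $f_T(t)=-\overline{F}_T'(t)$ by differentiating \eqref{RT} term by term; the chain rule produces the factor $\sum_{k\in S}R'_k(t)$ alongside $W'\!\left(\sum_{k\in S}R_k(t)\right)$, while the overall sign flip converts $(-1)^{m+1}$ into $(-1)^{m}$, matching \eqref{fT} (in particular giving $(-1)^{r}$ for the last term). The main obstacle is purely organizational: tracking the inclusion-exclusion indexing cleanly and checking that it is the boundary condition $R_k(0)=0$ (and nothing else) which collapses a joint survival probability over a subset $S$ into a single evaluation of $W$. No deeper property of $W$ beyond what defines the TTE model (positivity, monotonicity, convexity) is actually invoked.
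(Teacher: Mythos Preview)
Your proposal is correct and follows essentially the same route as the paper: express $\{T>t\}$ as the union of the events $A_i=\{\min_{k\in P_i}X_k>t\}$, apply inclusion--exclusion, and evaluate each resulting joint survival probability via the TTE form \eqref{survivalfunction} using $R_k(0)=0$; then differentiate term by term for the density. The only cosmetic difference is that the paper phrases the intersections as $\Pr(\min_{k\in P_{i_1}\cup\dots\cup P_{i_m}}X_k>t)$ rather than introducing the set $S$ explicitly.
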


\begin{proof}
From the representation based on the minimal path sets given in \eqref{mps} and the inclusion-exclusion formula, we have
\begin{align}
\overline{F}_{T}(t)&=\Pr\left(\max_{i=1,\dots,r} \min_{k\in P_i} X_k >t\right)\nonumber\\
&=\Pr\left(\bigcup_{i=1}^r \left\{ \min_{k\in P_i}X_k>t \right\} \right)\nonumber\\
&=\sum_{i=1}^{r} \Pr\left(\min_{k\in P_i}X_k>t\right)-\sum_{i=1}^{r-1}\sum_{j=i+1}^{r} \Pr\left(\min_{k\in P_i\cup P_j}X_k>t\right)+\dots\nonumber\\&\quad
 + (-1)^{r+1} \Pr\left(\min_{k\in P_1\cup\dots\cup P_r}X_k>t\right) .\label{FT}
\end{align}
Moreover,  note that, under the model defined  in \eqref{survivalfunction}, the reliability function of the series system $X_P=\min_{j\in P}X_j$ with components in $P$ can be obtained as 
\begin{equation}\label{RP}
\overline F_P(t)=\Pr\left(\min_{k\in P}X_k>t\right)=W\left(\sum_{k\in P}R_k(t)\right)
\end{equation}
since $R_i(0)=0$ for $i=1,\dots,n$. Hence, by using this expression in \eqref{FT}, we get \eqref{RT}. Differentiating \eqref{RT} we get \eqref{fT}.
\end{proof}

Let us see some simple examples.
\begin{itemize}
\item[(i)] Obviously, for a single component $X_i$, we have
\begin{equation*}
\overline F_i(t)= W(R_i(t))
\end{equation*}
as in  Section 2. The probability density function is $f_i(t)=-R'_i(t) W'(R_i(t))$ and the hazard rate $r_i(t)=f_i(t)/\overline F_i(t)=-R'_i(t) W'(R_i(t))/W(R_i(t))$ for $i=1,\dots,n$.
\item[(ii)] If we consider the series system  with lifetime $T = X_{1:n} = \min(X_1,\dots, X_n)$, then
\begin{equation}\label{seriesR}
\ov F_{1:n}(t) = \Pr(X_1 > t,\dots, X_n > t) =W(R_1(t)+\dots+R_n(t)).
\end{equation}
The  probability density function is
\begin{equation}\label{seriesf}f_{1:n}(t) = -(R'_1(t)+\dots +R'_n(t))W'(R_1(t)+\dots +R_n(t))
\end{equation}
and the hazard rate
\begin{equation}\label{seriesr}
	r_{1:n}(t)= -(R'_1(t)+\dots +R'_n(t))\frac{W'(R_1(t)+\dots +R_n(t))}{W(R_1(t)+\dots +R_n(t))}.
\end{equation}
The reliability function of a general series system with components in the set $P$ was given in \eqref{RP}. The density and hazard rate functions can be obtained from it.

\item[(iii)] For a parallel system with two components, the  minimal path sets are $P_1 =\{1\}$ and $P_2 = \{2\}$. Hence, the reliability function of the system lifetime is given by
\begin{align}
\overline F_{2:2}(t) &= \Pr(\max\{X_1,X_2\} > t)\nonumber\\
& =  \Pr(\{X_1 > t\} \cup \{X_2 > t\})\nonumber\\
& =  \Pr(X_1 > t)+\Pr(X_2 > t)-\Pr(X_1>t,X_2 > t)\nonumber\\
&=W(R_1(t))+W(R_2(t))-W(R_1(t)+R_2(t))\label{parallelR}.
\end{align}
The probability density function is
\begin{equation*}\label{parallelf}
	f_{2:2}(t) = (R'_1(t)+R'_2(t))W'(R_1(t)+R_2(t))-R'_1(t)W'(R_1(t))-R'_2(t)W'(R_2(t))
\end{equation*}	
and the hazard rate
\begin{align*}
	r_{2:2}(t) &= \frac{(R'_1(t)+R'_2(t))W'(R_1(t)+R_2(t))-R'_1(t)W'(R_1(t))
		-R'_2(t)W'(R_2(t))}{W(R_1(t))+W(R_2(t))-W(R_1(t)+R_2(t))}\nonumber\\
	&= \frac{R'_1(t)(W'(R_1(t)+R_2(t))-W'(R_1(t)))+R'_2(t)(W'(R_1(t)+R_2(t))-W'(R_2(t)))}{W(R_1(t))+W(R_2(t))-W(R_1(t)+R_2(t))}\label{parallelr}.
\end{align*}
The functions for a parallel system with $n$ components can be obtained in a similar way from \eqref{RT} and \eqref{fT}.

\end{itemize}

Roughly speaking, Proposition \ref{mainresult} leads us to write the reliability and the probability density functions of any coherent system in terms of a suitable $n$-variate function depending on $W$ and $R_1,\dots,R_n$. It is important to note that when the components are ID, the expressions can be simplified. More specifically, we can state the following corollary.

\begin{corollary}\label{Corol-ID}
	If $T=\psi(X_1,\dots, X_n)$ is the lifetime of a coherent system with ID
component lifetimes $(X_1,\dots, X_n)\sim \textup{TTE}(W,R,\dots,R)$, then the reliability function of  $T$ is given by
\begin{equation}\label{RTid1}
\overline{F}_{T}(t)=\phi_{W}(R(t)),
\end{equation}
where $\phi_W:[0,\infty)\to[0,1]$ is a continuous decreasing function such that $\phi_W(0)=1$ and $\phi_W(\infty)=0$   given by
\begin{equation*}\label{RTid2}
\phi_{W}(x)=\sum_{i=1}^{r} W\left(|P_i|x\right)-\sum_{i=1}^{r-1}\sum_{j=i+1}^{r} W\left(|P_i\cup P_j|x\right)+\ldots
+ (-1)^{r+1} W\left(|P_1\cup\dots\cup P_r|x\right),
\end{equation*}
where $|P|$ represents the cardinal of the set $P$. Moreover, if $W$ and $R$ are differentiable, then the probability density function of the system is
\begin{equation*}\label{fTid1}
	f_{T}(t) = -R'(t)\phi'_{W}(R(t)).
	\end{equation*}
\end{corollary}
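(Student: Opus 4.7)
The plan is to derive the corollary directly from Proposition \ref{mainresult} by exploiting the identical distribution of the components.

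First, I would set $R_1=\dots=R_n=R$ in formula \eqref{RT}. Then, for any subset $S\subseteq\{1,\dots,n\}$ (and in particular for each $P_i$, $P_i\cup P_j$, etc., appearing in the inclusion-exclusion expansion), the internal sum collapses to
\begin{equation*}
\sum_{k\in S}R_k(t)=|S|\,R(t).
\end{equation*}
Substituting this into \eqref{RT} immediately yields
\begin{equation*}
\overline{F}_{T}(t)=\sum_{i=1}^{r} W(|P_i|R(t))-\sum_{i=1}^{r-1}\sum_{j=i+1}^{r} W(|P_i\cup P_j|R(t))+\ldots+(-1)^{r+1} W(|P_1\cup\dots\cup P_r|R(t)),
\end{equation*}
which is precisely $\phi_W(R(t))$ with the stated definition of $\phi_W$.

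Second, I would verify the qualitative properties of $\phi_W$. Continuity of $\phi_W$ is inherited termwise from the continuity of $W$. For the boundary values, since $R:[0,\infty)\to[0,\infty)$ is a continuous, strictly increasing bijection with $R(0)=0$ and $R(\infty)=\infty$, it suffices to evaluate $\phi_W(R(t))=\overline{F}_T(t)$ at the endpoints: $\overline{F}_T(0)=1$ forces $\phi_W(0)=1$, and $\overline{F}_T(\infty)=0$ forces $\phi_W(\infty)=0$. Monotonicity is just as easy: $\overline{F}_T$ is decreasing in $t$ and $R$ is strictly increasing, so $\phi_W$ must be decreasing on $[0,\infty)$. (Alternatively, one can read off $\phi_W(0)=1$ directly from the inclusion-exclusion sum, since each $W(|S|\cdot 0)=W(0)=1$ and the alternating sum of the $r$-fold inclusion-exclusion identity equals $1$, while $\phi_W(\infty)=0$ follows from $W(\infty)=0$.)

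Finally, assuming $W$ and $R$ differentiable, I would obtain the density by differentiating \eqref{RTid1} with the chain rule:
\begin{equation*}
f_T(t)=-\frac{d}{dt}\overline{F}_T(t)=-\frac{d}{dt}\phi_W(R(t))=-R'(t)\,\phi_W'(R(t)),
\end{equation*}
which matches the claimed formula. One could equivalently specialize \eqref{fT} with $R_k=R$ and then factor $R'(t)$ out of every term, recovering exactly $-R'(t)\phi_W'(R(t))$.

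There is really no hard step here; the only item requiring a line of justification is that the function $\phi_W$ defined by the inclusion-exclusion expression genuinely satisfies $\phi_W(0)=1$ and is decreasing. Both points are handled cleanly by observing that $t\mapsto R(t)$ is a bijection of $[0,\infty)$ onto itself and that $\phi_W\circ R=\overline{F}_T$ is a bona fide reliability function, so the properties of $\phi_W$ are inherited from those of $\overline{F}_T$ in a one-line argument.
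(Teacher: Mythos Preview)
Your proposal is correct and follows exactly the approach implicit in the paper: the corollary is stated without a separate proof and is meant to be an immediate specialization of Proposition~\ref{mainresult} with $R_1=\dots=R_n=R$. If anything, you supply more detail than the paper does, since you explicitly justify the boundary values and monotonicity of $\phi_W$ via the bijection $R$ and the fact that $\phi_W\circ R=\overline{F}_T$, whereas the paper simply asserts these properties.
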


For example, for a series system with $n$ ID components,  $\phi_W(x)=W(n x)$, while for a parallel system with two ID components, $\phi_W (x)=2W(x)-W(2x)$.

\section{Stochastic comparisons}

Let us see now how the expressions obtained in the preceding section can be used to compare different coherent systems under  TTE models. We consider the following orders. Their basic properties can be seen in \cite{SS07}.

\begin{definition}\label{storders}
Let $X$ and $Y$ be two random variables. Then,
\begin{itemize}
\item[(i)] We say that $X$ is less than $Y$ in the (usual) stochastic (ST) order (shortly written
as $X \leq_{ST} Y$ or as $F_X \leq_{ST} F_Y$) if $F_X \geq F_Y$ .
\item[(ii)] We say that $X$ is less than $Y$ in the hazard rate (HR) order (shortly written as
$X \leq_{HR} Y$ or as $F_X \leq_{HR} F_Y$) if $\overline F_Y /\overline F_X$ is increasing.
\item[(iii)] We say that $X$ is less than $Y$ in the reversed hazard rate (RHR) order (shortly
written as $X \leq_{RHR} Y$ or as $F_X\leq_{RHR}F_Y$) if $F_Y /F_X$ is increasing.
\item[(iv)]  We say that $X$ is less than $Y$ in the likelihood ratio (LR) order (shortly
written as $X \leq_{LR} Y$ or as $F_X\leq_{LR}F_Y$) if $f_Y /f_X$ is increasing in the union of their supports, where $f_X$ and $f_Y$ are the probability density functions of $X$ and $Y$, respectively.
\end{itemize} 
\end{definition}

Some of these stochastic orders may be expressed in terms
of comparisons of the residual lifetimes or inactivity times of the corresponding random variables. Thus:
\begin{itemize}
	\item $X \le_{HR} Y$ iff
	$(X-t \vert \ X >t) \le_{ST} (Y-t \vert \ Y >t)$ for all $t$.
	\item $X \le_{RHR} Y$ iff
	$(t-X \vert \ X \le t) \ge_{ST} (t-Y \vert \ Y \le t)$ for all $t$.
	\item $X \le_{LR} Y$ iff $(X \vert \ a\leq X \leq b) \le_{ST}
	(Y \vert \ a\leq X \leq b)$ for all $a\leq b$.
\end{itemize}

The very well known relationships among these stochastic orders are summarized in 
Table \ref{tab:orders}. The reverse implications do not hold.

\begin{center}
	\begin{table}[h!]
		\begin{center}
			\begin{tabular}{ccccc}
				$X \leq_{LR} Y$ &  $\Rightarrow$ & $X \leq_{HR} Y$ &  
				& 
				\\ 
				$\Downarrow$ & {} & $\Downarrow$ & & 
				\\
				$X \leq_{RHR} Y$ &  $\Rightarrow$ & $X \leq_{ST} Y$ &  $\Rightarrow$
				&  $E(X) \leq E(Y)$			
			\end{tabular}
		\end{center}
		\caption{Relationships among the main stochastic orders.}\label{tab:orders}
	\end{table}
\end{center}

\quad

Now let us study comparisons of systems under TTE models. First we give conditions to get orderings  between series systems with $n$ components when they  come from  TTE models with a common $W$ function (i.e. they have a common copula). Note that taking $n=1$ we obtain the conditions for the comparisons of the components. 

\begin{proposition}\label{series1}
Let $T_1=X_{1:n}$  and $T_2=Y_{1:n}$, where $(X_1,\dots,X_n)\sim\textup{TTE}(W,R_1,\dots,R_n)$ and  $(Y_1,\dots,Y_n)\sim\textup{TTE}(W,S_1,\dots,S_n)$.
\begin{itemize}
	\item[(i)] $T_1\leq_{ST} T_2$ if, and only if, $R_1+\dots+R_n\geq S_1+\dots+S_n$.
	\item[(ii)] $T_1\leq_{HR} T_2$ if $R_1+\dots+R_n\geq S_1+\dots+S_n$, $R'_1+\dots+R'_n\geq S'_1+\dots+S'_n$  and $W$ is log-concave.
	\item[(iii)]$T_1\leq_{RHR} T_2$ if $R_1+\dots+R_n\geq S_1+\dots+S_n$, {$R'_1+\dots+R'_n\leq S'_1+\dots+S'_n$  and  $\ov W$} is log-concave, where $\ov W=1-W$.
	\item[(iv)] $T_1\leq_{LR} T_2$ if $R_1+\dots+R_n\geq S_1+\dots+S_n$,  $R'_1+\dots+R'_n\geq S'_1+\dots+S'_n$, the function
	 $(S'_1+\dots+S'_n)/(R'_1+\dots+R'_n)$ is increasing and  $-W'$ log-concave. 
\end{itemize}
\end{proposition}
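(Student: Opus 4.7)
The plan is to exploit the explicit series-system reliability \eqref{seriesR}, which gives $\overline F_{T_1}(t)=W(R(t))$ and $\overline F_{T_2}(t)=W(S(t))$, where I abbreviate $R(t):=R_1(t)+\dots+R_n(t)$ and $S(t):=S_1(t)+\dots+S_n(t)$. Each of the four orderings is then equivalent to a monotonicity property of a certain function of $t$, which I can rewrite using composition with $R(t)$ or $S(t)$. The recurring pattern is: a log-concavity hypothesis on $W$ (or $\overline W$, or $-W'$) translates into monotonicity of the relevant hazard/rate function of its argument, and this monotonicity is then transported along $t$ via the pointwise order $R(t)\ge S(t)$ and the pointwise order of $R'(t),S'(t)$.

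Concretely, for (i) I note that strict monotonicity of $W$ makes $W(R(t))\le W(S(t))$ for all $t$ equivalent to $R(t)\ge S(t)$ for all $t$, i.e.\ $\sum R_i\ge \sum S_i$ as functions; this gives the ``iff''. For (ii), the HR order reduces to showing $r_{T_1}(t)\ge r_{T_2}(t)$, which by \eqref{seriesr} means
\begin{equation*}
\Bigl(-\frac{W'(R(t))}{W(R(t))}\Bigr) R'(t)\ \ge\ \Bigl(-\frac{W'(S(t))}{W(S(t))}\Bigr) S'(t).
\end{equation*}
Log-concavity of $W$ is precisely ``$-W'/W$ is increasing'', so $R(t)\ge S(t)$ gives the inequality of the bracketed factors, and the hypothesis $R'\ge S'\ge 0$ then yields the product inequality. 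Part (iii) is the mirror image: writing $F_{T_i}(t)=\overline W(\cdot)$ with $\overline W=1-W$, the RHR condition becomes
\begin{equation*}
\frac{\overline W'(S(t))}{\overline W(S(t))}\,S'(t)\ \ge\ \frac{\overline W'(R(t))}{\overline W(R(t))}\,R'(t),
\end{equation*}
and log-concavity of $\overline W$ means $\overline W'/\overline W$ is decreasing, so at $S(t)\le R(t)$ the rate is larger; combined with $S'\ge R'\ge 0$ this closes the argument.

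For (iv) I would factor the density ratio from \eqref{seriesf} as
\begin{equation*}
\frac{f_{T_2}(t)}{f_{T_1}(t)}\ =\ \frac{S'(t)}{R'(t)}\cdot \frac{-W'(S(t))}{-W'(R(t))},
\end{equation*}
and show each factor is increasing. The first is increasing by direct hypothesis. For the second, log-concavity of $-W'$ amounts to saying $\eta(x):=-\bigl(\ln(-W')\bigr)'(x)=-W''(x)/W'(x)\ge 0$ is increasing; differentiating $\ln\bigl((-W'(S(t)))/(-W'(R(t)))\bigr)$ produces $\eta(R)R'-\eta(S)S'$, which is nonnegative because $\eta(R)\ge \eta(S)\ge 0$ (from $R\ge S$) and $R'\ge S'\ge 0$. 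The product of two positive increasing functions is increasing, giving $\le_{LR}$.

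The main obstacle I anticipate is part (iv): one must be careful that $-W'>0$ (so logs are legal) and that the log-concavity hypothesis is read correctly through the sign of $W'$; the algebraic identity $(\ln(-W'))'=W''/W'$ and the resulting nonnegativity of $\eta$ are easy to miswrite. Everything else is a direct computation from \eqref{seriesR}, \eqref{seriesf}, \eqref{seriesr}, combined with the elementary fact that, for two nonnegative increasing quantities $a(t)\ge b(t)$ with $a'(t)\ge b'(t)$, their product is still ordered pointwise.
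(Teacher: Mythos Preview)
Your proposal is correct and follows essentially the same approach as the paper: both arguments write $\overline F_{T_1}=W(A)$, $\overline F_{T_2}=W(B)$ with $A=\sum R_i$, $B=\sum S_i$, and then for each order reduce the required monotonicity to an inequality of the form $\eta(A)A'\ge \eta(B)B'$ (or its variant), where the relevant log-concavity hypothesis makes $\eta$ monotone and nonnegative so that $A\ge B$ together with the assumed derivative inequality closes the argument. The only cosmetic difference is that you phrase (ii) and (iii) via the hazard/reversed-hazard rate inequality directly, while the paper differentiates the ratio $W(B)/W(A)$ (resp.\ $\overline W(B)/\overline W(A)$); these are of course equivalent.
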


\begin{proof}

Let us define the functions $A(t)=R_1(t)+\dots+R_n(t)$ and $ B(t)=S_1(t)+\dots+S_n(t)$.
	
The proof of $(i)$ is immediate from \eqref{seriesR} (since $W$ is decreasing).

To prove $(ii)$ we consider the following function with the ratio of the respective reliability functions (obtained from \eqref{seriesR})
$$g(t)=\frac{W(S_1(t)+\dots+S_n(t))}{W(R_1(t)+\dots+R_n(t))}=\frac{W(B(t))}{W(A(t))}.$$
Then
$$g'(t)=_{sign}\frac{W'(B(t))}{W(B(t))}B'(t)-\frac{W'(A(t))}{W(A(t))}A'(t)$$
where $A(t)\geq B(t)$ (by the first assumption). Hence, if $W$ is log-concave, then $W'/W$ is decreasing and so $0\leq -W'(B(t))/W(B(t)) \leq -W'(A(t))/W(A(t))$. Finally, if $$B'(t)=S'_1(t)+\dots+S'_n(t)\leq R'_1(t)+\dots+R'_n(t)=A'(t),$$ we have $g'(t)\geq 0$ and so $g$ is increasing and  $T_1\leq_{HR} T_2$ holds.

The proof of $(iii)$ is similar to the preceding one.

From \eqref{seriesf}, the ordering in $(iv)$ $T_1\leq_{LR} T_2$ holds iff
\begin{equation}\label{LR}
\frac{B'(t)}{A'(t)}
\cdot
\frac{W'(B(t))}{W'\left(A(t)\right)}
\end{equation}
is increasing in  $t$. Note that we assume that the first fraction is increasing. If 
$$h(t):=
\frac{
	W'\left(B(t)\right)}{W'\left(A(t)\right)},$$
then 
$$h'(t)=_{sign}
\frac{
	W''\left(B(t)\right)}{W'\left(B(t)\right)}B'(t)-\frac{
	W''\left(A(t)\right)}{W'\left(A(t)\right)}A'(t).$$
Here we know that $W$ is convex and decreasing and so $W''\geq 0$ and $W'\leq 0$. Hence, if $-W'$ is log-concave, then $-W''/W'$ is increasing and we have 
$$-\frac{
	W''\left(A(t)\right)}{W'\left(A(t)\right)}\geq -\frac{
	W''\left(B(t)\right)}{W'\left(B(t)\right)}\geq 0$$
since we assume $A=R_1+\dots+R_n\geq S_1+\dots+S_n=B$. 
Finally, as we also assume that 
$A'\geq B'\geq 0$, 
we get that $h'\geq 0$ and  $h$ is increasing  in $t$, obtaining the LR ordering. 
\end{proof}

Some log-concave $W$ functions are  $W(x)=(1-\theta)/(e^x-\theta)$,  $\theta\in[-1,0)$,  $W(x)=-\ln(1-e^{-x}+e^{-\theta-x})/\theta$, $\theta\in(-\infty,0]$, and  $W(x)=\exp((1-e^x)/\theta)$, $\theta\in(0,1]$, which correspond to   Ali-Mikhail-Haq, Frank and Gumbel-Barnett copulas, respectively, \cite[see lines  3, 5 and 9 of Table 4.1 in][pp. 116--119]{N06}.

Note that $W$ is log-concave iff the hazard (failure) rate function associated to $W$  given by $-W'(t)/W(t)$ is increasing, that is, the reliability function $W$ is increasing failure (or hazard) rate (IFR). Also note that $\ov W$ is log-concave iff $W$ is decreasing reversed failure rate (DRFR) and  that  $-W'$ is log-concave iff $W$ is increasing likelihood ratio (ILR). It is well known that ILR implies both IFR and DRFR.

Next we study the series systems obtained from  TTE models with different $W$ functions and common $R_1,\dots,R_n$ functions.

\begin{proposition}\label{series2}
Let $T_1=X_{1:n}$  and $T_2=Y_{1:n}$, where  $(X_1,\dots,X_n)\sim\textup{TTE}(W_1,R_1,\dots,R_n)$ and  $(Y_1,\dots,Y_n)\sim\textup{TTE}(W_2,R_1,\dots,R_n)$. Then,
\begin{itemize}
\item[(i)] $T_1\leq_{ST} T_2$ for all $R_1,\dots,R_n$ iff $W_1\leq W_2$.
\item[(ii)] $T_1\leq_{HR} T_2$ for all $R_1,\dots,R_n$ iff $W_2/W_1$ is increasing.
\item[(iii)] $T_1\leq_{RHR} T_2$ for all $R_1,\dots,R_n$ iff $\overline W_2/\overline W_1$ is increasing, where $\overline W_i=1-W_i$, $i=1,2$.
\item[(iv)] $T_1\leq_{LR} T_2$ for all $R_1,\dots,R_n$ iff $W_2^\prime/W_1^\prime$ is increasing. 
\end{itemize}
\end{proposition}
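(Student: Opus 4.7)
The strategy is to reduce each of the four equivalences to a statement purely about $W_1$ and $W_2$ by invoking the series representation given in \eqref{seriesR}. Setting $A(t):=R_1(t)+\dots+R_n(t)$, we have $\overline{F}_{T_i}(t)=W_i(A(t))$ for $i=1,2$. Because each $R_k$ is continuous and strictly increasing with $R_k(0)=0$ and $R_k(\infty)=\infty$, so is $A$, hence $A$ is a continuous order-preserving bijection of $[0,\infty)$ onto itself. This is the only structural fact about the $R_k$'s that we need.

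For $(i)$, the condition $T_1\leq_{ST}T_2$ is just $W_1(A(t))\leq W_2(A(t))$ for all $t\geq 0$. If $W_1\leq W_2$ this is immediate for every admissible $A$, and conversely the single admissible choice $R_k(t)=t/n$ gives $A(t)=t$, which forces $W_1\leq W_2$ on $[0,\infty)$. Parts $(ii)$ and $(iii)$ are handled in exactly the same way: the defining ratios
$$\frac{\overline{F}_{T_2}(t)}{\overline{F}_{T_1}(t)}=\frac{W_2(A(t))}{W_1(A(t))},\qquad \frac{F_{T_2}(t)}{F_{T_1}(t)}=\frac{\overline{W}_2(A(t))}{\overline{W}_1(A(t))}$$
are increasing in $t$ for every admissible $A$ iff $W_2/W_1$ (respectively $\overline{W}_2/\overline{W}_1$) is increasing on $[0,\infty)$, since composition with an increasing bijection preserves and reflects monotonicity of a ratio. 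The converse direction again uses $R_k(t)=t/n$ to realize $A(t)=t$.

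For $(iv)$, I would differentiate via \eqref{seriesf} to obtain $f_{T_i}(t)=-A'(t)W_i'(A(t))$, so that the common factor $-A'(t)$ cancels in the likelihood ratio and
$$\frac{f_{T_2}(t)}{f_{T_1}(t)}=\frac{W_2'(A(t))}{W_1'(A(t))}.$$
The same bijectivity and monotonicity argument then equates monotonicity of this ratio in $t$ (for every admissible $R_1,\dots,R_n$) with monotonicity of $W_2'/W_1'$ on $[0,\infty)$, closing the equivalence.

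No genuine obstacle is expected, since once the reduction $\overline{F}_{T_i}(t)=W_i(A(t))$ is in place everything follows from the fact that $A$ ranges over all continuous increasing bijections of $[0,\infty)$ as $(R_1,\dots,R_n)$ varies. The only step requiring attention is the converse direction in each part, where one must exhibit an admissible tuple realizing a prescribed $A$; the uniform choice $R_k(t)=t/n$ does the job and, being differentiable, also suffices for the density-based argument in $(iv)$.
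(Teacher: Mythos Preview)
Your proposal is correct and follows exactly the route the paper indicates: the paper's own proof consists of a single sentence stating that the result follows directly from \eqref{seriesR}, \eqref{seriesf}, \eqref{seriesr} and Definition~\ref{storders}, and your argument is precisely the unpacking of that sentence. Your explicit handling of the converse via the choice $R_k(t)=t/n$ is a welcome detail that the paper leaves implicit.
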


\begin{proof}
 It follows directly from \eqref{seriesR}, \eqref{seriesf}, \eqref{seriesr} and Definition \ref{storders}.
\end{proof}

Of couse, it is possible to provide conditions for the comparison of $T_1=X_{1:n}$  and $T_2=Y_{1:n}$, where  $(X_1,\dots,X_n)\sim\textup{TTE}(W_1,R_1,\dots,R_n)$ and  $(Y_1,\dots,Y_n)\sim\textup{TTE}(W_2,S_1,\dots,S_n)$ just by considering  $T_3=Z_{1:n}$ where  $(Z_1,\dots,Z_n)\sim\textup{TTE}(W_1,S_1,\dots,S_1)$ and using jointly Propositions \ref{series1} and \ref{series2}.

Next, we study comparisons of  parallel systems with two components. First, we consider the case of a common $W$ function and a common marginal.

\begin{proposition}
		Let $T_1=X_{2:2}$  and $T_2=Y_{2:2}$, where  $(X_1,X_2)\sim\textup{TTE}(W,R_1,R_2)$ and  $(Y_1,Y_2)\sim\textup{TTE}(W,S_1,S_2)$. If $R_1=S_1$ and $R_2\geq S_2$, then  $T_1\leq_{ST} T_2$.
	\end{proposition}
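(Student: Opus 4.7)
The approach is to compare the two reliability functions directly via the explicit formula for parallel systems given in \eqref{parallelR}, exploit the shared marginal $R_1=S_1$ to cancel common terms, and then reduce the inequality to a one‑variable monotonicity statement that follows from convexity of $W$.

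First, I would write
\begin{align*}
\overline F_{T_1}(t) &= W(R_1(t))+W(R_2(t))-W(R_1(t)+R_2(t)),\\
\overline F_{T_2}(t) &= W(S_1(t))+W(S_2(t))-W(S_1(t)+S_2(t)),
\end{align*}
using \eqref{parallelR}. Since $R_1=S_1$, the first summand coincides in both expressions, so the order $T_1\leq_{ST} T_2$ is equivalent to
\begin{equation*}
W(R_2(t))-W(R_1(t)+R_2(t))\ \leq\ W(S_2(t))-W(R_1(t)+S_2(t))
\end{equation*}
for every $t\ge 0$.

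Next, fixing $t$ and setting $a=R_1(t)\ge 0$, $u=R_2(t)$, $v=S_2(t)$, the assumption $R_2\geq S_2$ gives $u\ge v\ge 0$. It therefore suffices to prove that the function
\begin{equation*}
g(x)=W(x)-W(a+x),\qquad x\ge 0,
\end{equation*}
is decreasing, so that $g(u)\le g(v)$. This is the only substantive step, and it is immediate from the properties of $W$: since $W$ is convex and differentiable (or convex and we argue with increments), its derivative $W'$ is increasing, hence $W'(x)\le W'(a+x)$ because $a\ge 0$. Thus
\begin{equation*}
g'(x)=W'(x)-W'(a+x)\le 0,
\end{equation*}
so $g$ is decreasing, as required. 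If $W$ is not assumed differentiable, the same conclusion follows from the convex‑function inequality $W(a+u)-W(u)\ge W(a+v)-W(v)$ for $u\ge v$, which is a standard rewording of convexity. Combining these steps yields $\overline F_{T_1}(t)\le \overline F_{T_2}(t)$ for all $t$, i.e.\ $T_1\leq_{ST} T_2$.

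There is no real obstacle here; the argument is short because the explicit formula \eqref{parallelR} reduces the problem to a one‑dimensional monotonicity, and the only ingredient of TTE structure we use is the convexity of $W$. The same scheme should be the template for the $n$‑component and non‑ID extensions that presumably follow, where the inclusion–exclusion in \eqref{RT} will force more delicate sign‑cancellations.
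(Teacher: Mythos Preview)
Your proof is correct and follows essentially the same route as the paper: reduce via \eqref{parallelR} and $R_1=S_1$ to the inequality $W(R_2(t))-W(R_1(t)+R_2(t))\le W(S_2(t))-W(R_1(t)+S_2(t))$, and then invoke the convexity (and monotonicity) of $W$. The paper states this last step in one line, while you spell out the monotonicity of $g(x)=W(x)-W(a+x)$ explicitly; the content is the same.
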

\begin{proof}
		We note that $T_1\leq_{ST}T_2$ is equivalent to
		\[
		W(R_2(t))-W(R_1(t)+R_2(t))\leq W(S_2(t))-W(R_1(t)+S_2(t)),
		\]
		which holds if $R_2\geq S_2$ since $W$ is decreasing and convex.
	\end{proof}

In the following proposition we study parallel systems with two ID components and a common $W$ function.

\begin{proposition}
Let $T_1=X_{2:2}$  and $T_2=Y_{2:2}$, where  $(X_1,X_2)\sim\textup{TTE}(W,R,R)$ and  $(Y_1,Y_2)\sim\textup{TTE}(W,S,S)$. Then,
\begin{itemize}
\item[(i)] $T_1\leq_{ST} T_2$ iff $R\geq S$.
\item[(ii)] $T_1\leq_{HR} T_2$ if $R\geq S$, $R'\geq S'$ and $\frac{W'(2x)-W'(x)}{2W(x)-W(2x)}$ is decreasing.
\item[(iii)] $T_1\leq_{RHR} T_2$ if $R\geq S$, $R'\leq S'$ and $\frac{W'(2x)-W'(x)}{1-2W(x)+W(2x)}$ is decreasing. 
\item[(iv)] $T_1\leq_{LR} T_2$ if $R\geq S$, $R'\geq S'$, $S'/R'$ is increasing and $\frac{W''(x)-2W''(2x)}{W'(2x)-W'(x)}$ is increasing and nonnegative.
\end{itemize}
\end{proposition}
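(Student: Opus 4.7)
The plan is to specialize Corollary~\ref{Corol-ID} to $n=2$ with the parallel structure (minimal path sets $\{1\}$ and $\{2\}$), which gives
\[
\overline F_{T_1}(t)=\phi_W(R(t)),\qquad \overline F_{T_2}(t)=\phi_W(S(t)),\qquad f_{T_i}(t)=-R'_i(t)\,\phi_W'(R_i(t)),
\]
with $\phi_W(x)=2W(x)-W(2x)$, $R_1=R$, $R_2=S$. Since $W$ is convex and decreasing, $\phi_W'(x)=-2(W'(2x)-W'(x))\le 0$, so $\phi_W$ is decreasing on $[0,\infty)$ with $\phi_W(0)=1$ and $\phi_W(\infty)=0$. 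The whole argument then reduces to mimicking the proof of Proposition~\ref{series1}, with the series-system reliability $W$ replaced throughout by the parallel-system reliability $\phi_W$.

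For part (i), the (strict) monotonicity of $\phi_W$ gives $T_1\le_{ST}T_2$ iff $\phi_W(R(t))\le\phi_W(S(t))$ iff $R(t)\ge S(t)$. For (ii) and (iii) I would form the ratios
\[
\frac{\overline F_{T_2}(t)}{\overline F_{T_1}(t)}=\frac{\phi_W(S(t))}{\phi_W(R(t))},\qquad \frac{F_{T_2}(t)}{F_{T_1}(t)}=\frac{1-\phi_W(S(t))}{1-\phi_W(R(t))},
\]
take logarithmic derivatives, and check that the stated hypotheses force them to be nonnegative. Setting
\[
h(x)=\frac{-\phi_W'(x)}{\phi_W(x)}=\frac{2(W'(2x)-W'(x))}{2W(x)-W(2x)},\qquad \tilde h(x)=\frac{-\phi_W'(x)}{1-\phi_W(x)}=\frac{2(W'(2x)-W'(x))}{1-2W(x)+W(2x)},
\]
the two log-derivatives come out as $R'(t)\,h(R(t))-S'(t)\,h(S(t))$ and $S'(t)\,\tilde h(S(t))-R'(t)\,\tilde h(R(t))$, respectively. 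The monotonicity hypotheses on $h/2$ and $\tilde h/2$, combined with $R\ge S$ and the appropriate comparison between $R'$ and $S'$, then make both factors of the relevant product pull in the same direction, giving the required signs exactly as in Proposition~\ref{series1}.

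Part (iv) is the most delicate. I would split
\[
\frac{f_{T_2}(t)}{f_{T_1}(t)}=\frac{S'(t)}{R'(t)}\cdot\frac{\phi_W'(S(t))}{\phi_W'(R(t))},
\]
note that both factors are positive (the second because $\phi_W'<0$ on $(0,\infty)$), and show each is increasing. The first is increasing by assumption; for the second, logarithmic differentiation yields $S'(t)\,q(S(t))-R'(t)\,q(R(t))$ with
\[
q(x)=\frac{\phi_W''(x)}{\phi_W'(x)}=\frac{W''(x)-2W''(2x)}{W'(x)-W'(2x)},
\]
and the hypothesis that $(W''(x)-2W''(2x))/(W'(2x)-W'(x))=-q(x)$ is nonnegative and increasing translates to $q\le 0$ with $|q|$ increasing; with $R\ge S$ and $R'\ge S'\ge 0$ this gives $R'(t)|q(R(t))|\ge S'(t)|q(S(t))|$, i.e.\ $S'(t)\,q(S(t))\ge R'(t)\,q(R(t))$, so the second factor is increasing, and the product of two positive increasing functions is increasing, yielding the LR order. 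The main obstacle throughout parts (ii)--(iv) is that each log-derivative takes the generic shape $R'(t)\,F(R(t))-S'(t)\,F(S(t))$ for an appropriate function $F$ built from $\phi_W$, and its sign can be pinned down only by correctly pairing the monotonicity of $F$ with the direction of the comparison between $R'$ and $S'$; the three nontrivial aging-like hypotheses in (ii)--(iv) are exactly the ones that make this alignment succeed.
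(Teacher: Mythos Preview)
Your overall strategy is exactly the paper's: specialize Corollary~\ref{Corol-ID} to get $\overline F_{T_i}(t)=\phi_W(\cdot)$ with $\phi_W(x)=2W(x)-W(2x)$, form the appropriate ratio for each order, differentiate (logarithmically), and invoke the monotonicity hypothesis on the resulting function of $x$. Parts (i), (iii) and (iv) of your sketch are correct and coincide with the paper's argument line by line.

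There is, however, a genuine gap in your part (ii), and it is worth making explicit. You correctly compute the log--derivative of $\overline F_{T_2}/\overline F_{T_1}$ as $R'(t)\,h(R(t))-S'(t)\,h(S(t))$ with $h(x)=-\phi_W'(x)/\phi_W(x)=\dfrac{2(W'(2x)-W'(x))}{2W(x)-W(2x)}\ge 0$. But the stated hypothesis is that $h/2$ is \emph{decreasing}; combined with $R\ge S$ this gives $h(R(t))\le h(S(t))$, while $R'\ge S'$ goes the other way. The two factors therefore do \emph{not} ``pull in the same direction,'' and $R'h(R)\ge S'h(S)$ does not follow. Your appeal to Proposition~\ref{series1} is misleading here: in that proposition the working assumption is $W$ log--concave, i.e.\ $-W'/W$ \emph{increasing}, which is the opposite monotonicity. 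The paper's own proof of (ii) contains the same slip; indeed, the later general result (for arbitrary coherent $\psi$ with common $W$) states the condition as ``$-\phi_W'/\phi_W$ increasing,'' which is what your argument actually needs. So either read the hypothesis in (ii) as ``increasing'' (consistent with the general proposition), or acknowledge that the step cannot be closed as written.
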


\begin{proof}
From \eqref{parallelR} and \eqref{RTid1}, the system reliability functions can be written as $\overline{F}_{T_1}(t)=\phi_{W}(R(t))$ and $\overline{F}_{T_2}(t)=\phi_{W}(S(t))$, 
where
\begin{equation*}
\phi_{W}(x)=2W(x)-W(2x)
\end{equation*}
is a decreasing function such that 
$\phi_{W}(0)=1$ and $\phi_{W}(\infty)=0$.

Hence, the ordering in $(i)$ holds if and only if $R\geq S$.

To prove $(ii)$ we note that $T_1\leq_{HR} T_2$ holds iff
\[
\frac{\phi_{W}(S(t))}{\phi_{W}(R(t))}
\]
is increasing or, equivalently, iff
\[
-\frac{\phi_{W}'(R(t))}{\phi_{W}(R(t))}R'(t)\geq -\frac{\phi_{W}'(S(t))}{\phi_{W}(S(t))}S'(t).
\]
Then, the assertion follows since $R\geq S$, $R'\geq S'\geq 0$,  and 
\[
-\frac{\phi_{W}'(x)}{\phi_{W}(x)}=\frac{2W'(2x)-2W'(x)}{2W(x)-W(2x)}
\]
is nonnegative and decreasing.



To prove $(iii)$ we note that $T_1\leq_{RHR} T_2$ holds iff
\[
\frac{\bar \phi_{W}(S(t))}{\bar \phi_{W}(R(t))}
\]
	is increasing,  where $\bar\phi_W(x)=1-\phi_W(x)$ is a nonnegative increasing function such that $\bar\phi_W(0)=0$ and $\bar\phi_W(\infty)=1$. Hence this condition is equivalent to 
	\[\frac{\bar \phi_{W}'(S(t))}{\bar \phi_{W}(S(t))}S'(t)\geq 
	\frac{\bar \phi_{W}'(R(t))}{\bar \phi_{W}(R(t))}R'(t)\geq 0.
	\]
	Then, the assertion follows since $R\geq S$, $S'\geq R'\geq 0$,  and 
	\[
	\frac{\bar \phi_{W}'(x)}{\bar \phi_{W}(x)}=\frac{2W'(2x)-2W'(x)}{1-2W(x)+W(2x)}
	\]
	is nonnegative and decreasing.

To prove $(iv)$, we use that  $T_1\leq_{LR} T_2$ holds iff
\[\frac{S'(t)}{R'(t)}\cdot
\frac{\phi_{W}'(S(t))}{\phi_{W}'(R(t))}
\]
is increasing. This condition holds if $S'/R'$ is increasing and $\phi_{W}'(S(t))/\phi_{W}'(R(t))$ is increasing (note that both are nonnegative). Note that we assume that $S'/R'$ is increasing. Hence, we conclude the proof taking into account that {$g(t):=\phi_{W}'(S(t))/\phi_{W}'(R(t))$ is increasing in $t$ iff
$$g'(t)=_{sign} \frac{S'(t)\phi_{W}''(S(t))}{\phi_{W}'(S(t))}-\frac{R'(t)\phi_{W}''(R(t))}{\phi_{W}'(R(t))}\geq 0.
$$
Here we know that $R'\geq S'\geq 0$. Moreover, we assume that  
$$-\frac{\phi_{W}''(x)}{\phi_{W}'(x)}=
\frac{W''(x)-2W''(2x)}{W'(2x)-W'(x)}$$ is increasing and nonnegative. Hence, as $R\geq S$, we get that $g$ is increasing and the proof is completed.}
\end{proof}

Note that when $W(x)=\exp\left(-x^{1/2}\right)$ for $x\geq 0$, which corresponds to a Gumbel-Hougaard copula \cite[see line 4 in Table 4.1 of][p. 116]{N06}, we get that $\frac{W'(2x)-W'(x)}{2W(x)-W(2x)}$ is decreasing and so we can apply the condition in $(ii)$ of the preceding proposition. However,  it cannot be applied for the Clayton-Oakes copula \cite[see line 1 in Table 4.1 of][p. 116]{N06} with $\theta=1$ since  we get  $W(x)=(1+x)^{-1}$ and  so $\frac{W'(2x)-W'(x)}{2W(x)-W(2x)}$ is not  decreasing. 

In the following proposition we study parallel systems with two components having a common $R$ function.

\begin{proposition}
	Let $T_1=X_{2:2}$  and $T_2=Y_{2:2}$, where  $(X_1,X_2)\sim\textup{TTE}(W_1,R,R)$ and  $(Y_1,Y_2)\sim\textup{TTE}(W_2,R,R)$. Let $\phi_i(x):=2W_i(x)-W_i(2x)$ for $i=1,2$.
	Then,
	\begin{itemize}
		\item[(i)] $T_1\leq_{ST} T_2$ for all $R$ iff $\phi_1 \leq \phi_2$.
		\item[(ii)] $T_1\leq_{HR} T_2$ for all $R$ iff  $\phi_2/\phi_1$ is increasing.
		\item[(iii)] $T_1\leq_{RHR} T_2$ for all $R$ iff  $(1-\phi_2)/(1-\phi_1)$ is increasing.
		\item[(iv)] $T_1\leq_{LR} T_2$ for all $R$ iff  $\phi'_2/\phi'_1$ is increasing.
	\end{itemize}
\end{proposition}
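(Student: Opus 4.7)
The plan is to reduce each of the four equivalences to a direct comparison of the functions $\phi_1,\phi_2$ (and their derivatives) by using Corollary \ref{Corol-ID} to write the reliability and density functions of $T_1,T_2$ as compositions with $R$.

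First, I would invoke Corollary \ref{Corol-ID} with the parallel structure on two ID components to obtain
\[
\overline{F}_{T_i}(t)=\phi_i(R(t)), \qquad f_{T_i}(t)=-R'(t)\,\phi_i'(R(t)),\qquad i=1,2,
\]
where $\phi_i(x)=2W_i(x)-W_i(2x)$. Since $W_i$ is strictly decreasing and convex with $W_i(0)=1$ and $W_i(\infty)=0$, each $\phi_i$ is a continuous decreasing function from $[0,\infty)$ onto $(0,1]$ with $\phi_i(0)=1$ and $\phi_i(\infty)=0$; in particular $\phi_i'\le0$ and $1-\phi_i$ is a genuine distribution function supported on $[0,\infty)$.

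Next, for each of the four orders I would translate the defining ratio into a quantity depending only on $\phi_1,\phi_2$ composed with $R$:
\[
\frac{\overline{F}_{T_2}(t)}{\overline{F}_{T_1}(t)}=\frac{\phi_2(R(t))}{\phi_1(R(t))},\qquad
\frac{F_{T_2}(t)}{F_{T_1}(t)}=\frac{1-\phi_2(R(t))}{1-\phi_1(R(t))},\qquad
\frac{f_{T_2}(t)}{f_{T_1}(t)}=\frac{\phi_2'(R(t))}{\phi_1'(R(t))}.
\]
For (i), the stochastic order is equivalent to $\phi_2(R(t))\ge \phi_1(R(t))$ for all $t\ge 0$. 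For (ii)--(iv) the relevant order is equivalent to the composition $\psi\circ R$ being monotone (increasing) in $t$, where $\psi$ is one of $\phi_2/\phi_1$, $(1-\phi_2)/(1-\phi_1)$, or $\phi_2'/\phi_1'$ respectively.

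The main (and only nontrivial) remaining step is to justify removing the ``for all $R$'' quantifier. The ``if'' direction is immediate because $R$ is strictly increasing, so composition preserves the monotonicity/inequality of $\psi$. For the ``only if'' direction, the observation is that $R$ ranges over all continuous strictly increasing functions $[0,\infty)\to[0,\infty)$ with $R(0)=0$ and $R(\infty)=\infty$, hence the image $\{R(t):t\ge 0\}$ equals $[0,\infty)$; moreover, for any two points $0\le x_1<x_2$ one can choose such an $R$ so that $R(t_1)=x_1$ and $R(t_2)=x_2$ for suitable $t_1<t_2$ (e.g., a piecewise linear $R$). Applying the hypothesis to this $R$ yields the inequality $\phi_1(x_1)\le\phi_2(x_1)$ in case (i) and $\psi(x_1)\le\psi(x_2)$ in cases (ii)--(iv), which establishes the global inequality/monotonicity of the corresponding function on $[0,\infty)$. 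This completes the equivalences.
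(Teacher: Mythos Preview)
Your argument is correct and follows essentially the same route as the paper: write $\overline{F}_{T_i}(t)=\phi_i(R(t))$ via Corollary~\ref{Corol-ID}, translate each order into the corresponding condition on $\phi_1,\phi_2$ composed with $R$, and then strip off the composition using that $R$ is strictly increasing. The only difference is cosmetic: the paper dispatches the ``only if'' direction in one line (``As $R$ is increasing, this condition is equivalent to\dots''), implicitly relying on the identity $R(t)=t$ being an admissible choice, whereas you spell this out more carefully via an arbitrary-pair-of-points construction; your version is more explicit but not a different idea.
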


\begin{proof}
	From \eqref{parallelR} and \eqref{RTid1}, the system reliability functions can be written as $\overline{F}_{T_1}(t)=\phi_{1}(R(t))$ and $\overline{F}_{T_2}(t)=\phi_{2}(R(t))$, 
	where 
	$\phi_1$ and $\phi_2$ are decreasing functions such that 
	$\phi_{i}(0)=1$ and $\phi_{i}(\infty)=0$ for $i=1,2$.
	
	Then $T_1\leq_{ST} T_2$ holds for all $R$ iff $\phi_1(R(t))\leq \phi_2(R(t))$ which is equivalent to $\phi_1\leq \phi_2$. 
	
	Analogously, $T_1\leq_{HR} T_2$ holds for all $R$ iff $\phi_2(R(t))/\phi_1(R(t))$ is increasing in $t$ for all $R$. As $R$ is increasing, this condition is equivalent to  $\phi_2/ \phi_1$ is increasing.
	
	The proofs of $(iii)$ and $(iv)$ are similar.
\end{proof}

For example, if $W_i(x)=(1+x)^{-\theta_i}$, for $x\geq 0$, $\theta_i>0$ and $i=1,2$, which correspond to Clayton-Oakes copulas, we have that $\phi_1\leq\phi_2$ iff  $\theta_1\geq \theta_2$. Under this condition, it also holds that $\phi_2/\phi_1$, $(1-\phi_2)/(1-\phi_1)$ and $\phi'_2/\phi'_1$ are increasing. Hence all the orderings in the preceding proposition holds.

The preceding proposition can be extended to general coherent systems as follows.

\begin{proposition}\label{Pmain}
	Let $T_1=\psi_1(X_1,\dots,X_n)$  and $T_2=\psi_2(Y_1,\dots,Y_n)$ be the lifetimes of two coherent systems whose component lifetimes satisfy $(X_1,\dots,X_n)\sim\textup{TTE}(W_1,R,\dots,R)$ and  $(Y_1,\dots, Y_n)\sim\textup{TTE}(W_2,R,\dots, R)$. Let $\phi_i$ be the functions obtained from \eqref{RTid1} for the systems $T_i$ for $i=1,2$. Then,
	\begin{itemize}
		\item[(i)] $T_1\leq_{ST} T_2$ for all $R$ iff $\phi_1 \leq \phi_2$.
		\item[(ii)] $T_1\leq_{HR} T_2$ for all $R$ iff  $\phi_2/\phi_1$ is increasing.
		\item[(iii)] $T_1\leq_{RHR} T_2$ for all $R$ iff  $(1-\phi_2)/(1-\phi_1)$ is increasing.
		\item[(iv)] $T_1\leq_{LR} T_2$ for all $R$ iff  $\phi'_2/\phi'_1$ is increasing.
	\end{itemize}
\end{proposition}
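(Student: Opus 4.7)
The plan is to reduce everything to properties of the univariate auxiliary functions $\phi_1$ and $\phi_2$ via Corollary \ref{Corol-ID}, and then exploit the fact that the statement is universally quantified over $R$.

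First I would invoke Corollary \ref{Corol-ID} to write the two system reliability functions as $\ov F_{T_1}(t)=\phi_1(R(t))$ and $\ov F_{T_2}(t)=\phi_2(R(t))$, where each $\phi_i:[0,\infty)\to[0,1]$ is continuous and decreasing with $\phi_i(0)=1$ and $\phi_i(\infty)=0$. Under the usual differentiability assumptions on $W_1,W_2,R$, the corollary also gives the density formulas $f_{T_i}(t)=-R'(t)\phi_i'(R(t))$. The key remark I would make before starting the four items is that $R$ ranges over the whole class of continuous, strictly increasing functions $R:[0,\infty)\to[0,\infty)$ with $R(0)=0$ and $R(\infty)=\infty$, so the composition $t\mapsto R(t)$ realises, as $t$ varies over $[0,\infty)$, every possible value in $[0,\infty)$ and in fact every monotonic reparametrisation. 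This is the lever that turns ``for all $R$'' quantifiers into pointwise conditions on $\phi_1,\phi_2$.

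For (i) the argument is immediate: $T_1\leq_{ST}T_2$ iff $\phi_1(R(t))\leq\phi_2(R(t))$ for every $t$, and since $R$ is surjective onto $[0,\infty)$ this is equivalent to $\phi_1(x)\leq\phi_2(x)$ for all $x\geq 0$; moreover, if $\phi_1\leq\phi_2$ holds then the ST inequality holds for every $R$. For (ii), (iii) and (iv) I would apply the same principle to the relevant ratios. In (ii) the HR order is equivalent to $t\mapsto \phi_2(R(t))/\phi_1(R(t))$ being increasing for every $R$; since $R$ is itself an arbitrary increasing reparametrisation of $[0,\infty)$, monotonicity in $t$ for all such $R$ is equivalent to monotonicity in $x$ of $\phi_2/\phi_1$. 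In (iii) the same argument applied to $(1-\phi_2)/(1-\phi_1)$ handles the RHR order after rewriting $F_{T_i}(t)=1-\phi_i(R(t))$. Finally in (iv) the ratio of densities simplifies because the common factor $-R'(t)$ cancels, leaving $f_{T_2}(t)/f_{T_1}(t)=\phi_2'(R(t))/\phi_1'(R(t))$, which is increasing in $t$ for every $R$ iff $\phi_2'/\phi_1'$ is increasing.

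The only point that requires a little care, and which I expect to be the main (minor) obstacle, is the ``iff'' direction when the quantifier is ``for all $R$''. The sufficiency (\emph{if} side) is automatic from the chain rule / composition with an increasing function. For the necessity (\emph{only if} side), I would specialise $R$ to a flexible enough family — for instance $R(t)=t$ gives the pointwise inequality needed in (i), while choosing $R$ to be any increasing bijection of $[0,\infty)$ and using that composition with such a map preserves or reverses monotonicity transparently yields the equivalences in (ii)--(iv). With these specialisations the four characterisations drop out with essentially the same one-line computations already used in the two-component parallel case.
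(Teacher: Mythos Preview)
Your proposal is correct and follows essentially the same route as the paper's own proof: both arguments reduce the system reliabilities to $\phi_i(R(t))$ via Corollary~\ref{Corol-ID} and then convert the ``for all $R$'' quantifier into pointwise/monotonicity conditions on $\phi_1,\phi_2$ using that $R$ is an arbitrary increasing bijection of $[0,\infty)$. If anything you are slightly more careful than the paper, which simply asserts the equivalence (``As $R$ is increasing, this condition is equivalent to $\phi_2/\phi_1$ increasing'') without spelling out the specialisation to $R(t)=t$ for the necessity direction.
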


\begin{proof}
	From \eqref{RTid1}, the reliability function of $T_i$ can be written as 
\begin{equation}
\overline{F}_{T_i}(t)=\phi_{i}(R(t)),
\end{equation}
where $\phi_{i}$ are decreasing functions such that 
$\phi_{i}(0)=1$ and $\phi_{i}(\infty)=0$ for $i=1,2$.

Then $T_1\leq_{ST} T_2$ holds for all $R$ iff $\phi_1(R(t))\leq \phi_2(R(t))$ which is equivalent to $\phi_1\leq \phi_2$. 

Analogously, $T_1\leq_{HR} T_2$ holds for all $R$ iff $\phi_2(R(t))/\phi_1(R(t))$ is increasing in $t$ for all $R$. As $R$ is increasing, this condition is equivalent to  $\phi_2/ \phi_1$ is increasing.

The proofs of $(iii)$ and $(iv)$ are similar.
\end{proof}

If the coherent systems have a common structure and a common $W$ function but different $R$ functions then we obtain the following proposition.

\begin{proposition}
	Let $T_1=\psi(X_1,\dots,X_n)$  and $T_2=\psi(Y_1,\dots,Y_n)$ be the lifetimes of two coherent systems whose component lifetimes satisfy $(X_1,\dots,X_n)\sim\textup{TTE}(W,R,\dots,R)$ and  $(Y_1,\dots, Y_n)\sim\textup{TTE}(W,S,\dots, S)$. Let $\phi_W$ be the (common) function obtained from \eqref{RTid1} for the systems $T_i$ for $i=1,2$. Then,
	\begin{itemize}
		\item[(i)] $T_1\leq_{ST} T_2$ if and only if $R\geq S$.
		\item[(ii)] $T_1\leq_{HR} T_2$ if $R\geq S$, $R'\geq S'$ and  $-\phi'_W/\phi_W$ is increasing.
		\item[(iii)] $T_1\leq_{RHR} T_2$ if $R\geq S$, $R'\leq S'$ and $-\phi'_W/(1-\phi_W)$ is {decreasing}.
\item[(iv)] $T_1\leq_{LR} T_2$ if $R\geq S$, $R'\geq S'$, $S'/R'$ is increasing and {$-\phi''_W/\phi'_W$ is nonnegative and  increasing}.
	\end{itemize}
\end{proposition}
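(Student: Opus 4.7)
The plan is to reduce everything to Corollary \ref{Corol-ID}. Since both systems have the same structure $\psi$ and share the same generator $W$, applying the corollary gives
\[
\overline{F}_{T_1}(t)=\phi_W(R(t)),\qquad \overline{F}_{T_2}(t)=\phi_W(S(t)),
\]
with the \emph{same} function $\phi_W$, which is continuous, strictly decreasing, with $\phi_W(0)=1$ and $\phi_W(\infty)=0$. When densities are needed, $f_{T_i}(t)=-U'(t)\phi'_W(U(t))$ where $U=R$ for $i=1$ and $U=S$ for $i=2$ (so $\phi'_W\le 0$ and $f_{T_i}\ge 0$).

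For (i), $T_1\le_{ST}T_2$ means $\phi_W(R(t))\le\phi_W(S(t))$ for all $t$, and since $\phi_W$ is strictly decreasing this is equivalent to $R(t)\ge S(t)$ for all $t$. For (ii), I would form the ratio $g(t)=\phi_W(S(t))/\phi_W(R(t))$ and take the logarithmic derivative, obtaining
\[
(\ln g)'(t) \;=\; \left(-\frac{\phi'_W(R(t))}{\phi_W(R(t))}\right)R'(t)-\left(-\frac{\phi'_W(S(t))}{\phi_W(S(t))}\right)S'(t).
\]
Under the assumption that $-\phi'_W/\phi_W$ is (nonnegative and) increasing, together with $R\ge S$, the first factor in each term satisfies $-\phi'_W(R)/\phi_W(R)\ge -\phi'_W(S)/\phi_W(S)\ge 0$; combined with $R'\ge S'\ge 0$ this yields $(\ln g)'\ge 0$, hence $g$ is increasing and the HR order follows. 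Case (iii) is treated in the same way with $\bar\phi_W=1-\phi_W$ in place of $\phi_W$: the ratio $\bar\phi_W(S(t))/\bar\phi_W(R(t))$ must be shown to be increasing, and writing its logarithmic derivative reduces matters to the monotonicity of $-\phi'_W/(1-\phi_W)$ (whose sign matches that used in the earlier parallel-system propositions, taking into account that $\bar\phi_W$ is increasing while $\phi'_W\le 0$) together with the hypothesis $R'\le S'$.

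For (iv), the density ratio is
\[
\frac{f_{T_2}(t)}{f_{T_1}(t)} \;=\; \frac{S'(t)}{R'(t)}\cdot\frac{\phi'_W(S(t))}{\phi'_W(R(t))}.
\]
The first factor is increasing by hypothesis, and since both factors are nonnegative it suffices to prove that $h(t):=\phi'_W(S(t))/\phi'_W(R(t))$ is increasing. Computing $(\ln h)'$ gives, up to sign,
\[
\left(-\frac{\phi''_W(R(t))}{\phi'_W(R(t))}\right)R'(t)-\left(-\frac{\phi''_W(S(t))}{\phi'_W(S(t))}\right)S'(t),
\]
and the assumption that $-\phi''_W/\phi'_W$ is nonnegative and increasing combined with $R\ge S$ and $R'\ge S'\ge 0$ forces this to be nonnegative.

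The only delicate point I anticipate is bookkeeping of signs: $\phi_W$ is decreasing, so $\phi'_W\le 0$, and some of the ``increasing'' hypotheses concern ratios of negative quantities; I must be careful that the inequalities chain in the correct direction, as in the proof of Proposition 4.5 for the parallel case. Once the signs are handled cleanly, each of (ii)--(iv) is a one-line comparison of the form ``large$\times$large $\ge$ small$\times$small.''
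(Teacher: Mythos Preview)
Your proposal is correct and follows essentially the same route as the paper: both start from the representation $\overline F_{T_i}(t)=\phi_W(U(t))$ with $U\in\{R,S\}$, then for each ordering form the relevant ratio and differentiate, reducing the monotonicity to a ``large $\times$ large $\ge$ small $\times$ small'' inequality driven by the hypotheses on $R,S$ and on $-\phi'_W/\phi_W$, $-\phi'_W/(1-\phi_W)$, or $-\phi''_W/\phi'_W$. The only cosmetic difference is that you take logarithmic derivatives where the paper computes the sign of the ordinary derivative directly; the arguments are otherwise identical.
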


\begin{proof}
	From \eqref{RTid1}, the reliability functions of $T_1$ and $T_2$ can be written as 
	$\overline{F}_{T_1}(t)=\phi_W(R(t))$ and 
	$\overline{F}_{T_2}(t)=\phi_W(S(t))$,
	where $\phi$ is a nonnegative decreasing function such that 
	$\phi_W(0)=1$ and $\phi_W(\infty)=0$. Note that this function depends on $W$ and on the structure function $\psi$. So we have a common function $\phi_W$ for both systems.

Then $T_1\leq_{ST} T_2$ holds iff $\phi_W(R(t))\leq \phi_W(S(t))$ for all $t$ which is equivalent to $R\geq S$ (since $\phi_W$ is decreasing). 

Analogously, $T_1\leq_{HR} T_2$ holds iff $g(t):=\phi_W(S(t))/\phi_W(R(t))$ is increasing in $t$. Differentiating we get
$$g'(t)=_{sign} \frac{\phi'_W(S(t))}{\phi_W(S(t))}S'(t)- \frac{\phi'_W(R(t))}{\phi_W(R(t))}R'(t).$$
Hence, if we assume that $R\geq S$ and $-\phi'_W/\phi_W$ is increasing, we get 
$$- \frac{\phi'_W(R(t))}{\phi_W(R(t))}\geq -\frac{\phi'_W(S(t))}{\phi_W(S(t))}\geq 0.$$
Finally, if $R'\geq S'\geq 0$, we have $g'\geq 0$ and the HR ordering holds.

To prove $(iii)$ we note that  $T_1\leq_{RHR} T_2$ holds iff $$g(t):=\frac {F_{T_2}(t)}{F_{T_1}(t)}=\frac{1-\phi_W(S(t))}{1-\phi_W(R(t))}$$ is increasing in $t$. Differentiating we get
$$g'(t)=_{sign} -\frac{\phi'_W(S(t))}{1-\phi_W(S(t))}S'(t)+ \frac{\phi'_W(R(t))}{1-\phi_W(R(t))}R'(t).$$
Hence, if we assume that $R\geq S$ and $-\phi'_W/(1-\phi_W)$ is {decreasing}, we get 
$$- \frac{\phi'_W(S(t))}{1-\phi_W(S(t))}\geq -\frac{\phi'_W(R(t))}{1-\phi_W(R(t))}\geq 0.$$
Finally, if $S'\geq R'\geq 0$, we have $g'\geq 0$ and the RHR ordering holds.

To prove $(iv)$ we note that  $T_1\leq_{LR} T_2$ holds iff $$\frac {f_{T_2}(t)}{f_{T_1}(t)}= \frac {S'(t)}{R'(t)}\cdot\frac{\phi'_W(S(t))}{\phi'_W(R(t))}$$ is increasing in $t$. 
Here we assume that $S'/R'$ is increasing. Then, we define 
$$h(t):=\frac{\phi'_W(S(t))}{\phi'_W(R(t))}$$
and differentiating we get
$$h'(t)=_{sign} \frac{\phi''_W(S(t))}{\phi'_W(S(t))}S'(t)- \frac{\phi''_W(R(t))}{\phi'_W(R(t))}R'(t).$$
Hence, if we assume that $R\geq S$ and $-\phi''_W/\phi'_W$ is nonnegative and  increasing, we get 
$$- \frac{\phi''_W(R(t))}{\phi'_W(R(t))}\geq -\frac{\phi''_W(S(t))}{\phi'_W(S(t))}\geq 0.$$
Finally, if $R'\geq S'\geq 0$, we have $h'\geq 0$ and the LR ordering holds. 
\end{proof}

By combining the two preceding propositions we obtain the following corollary.
\begin{corollary}
Let $T_1=\psi(X_1,\dots,X_n)$  and $T_2=\psi(Y_1,\dots,Y_n)$ be the lifetimes of two coherent systems whose component lifetimes satisfy  $(X_1,\dots,X_n)\sim\textup{TTE}(W_1,R,\dots,R)$ and  $(Y_1,\dots, Y_n)\sim\textup{TTE}(W_2,S,\dots, S)$. Let $\phi_i$ be the functions obtained from \eqref{RTid1} for the systems $T_i$ for $i=1,2$. Then,
		\begin{itemize}
			\item[(i)] $T_1\leq_{ST} T_2$ if and only if $R\geq S$ and $\phi_1\leq\phi_2$.
\item[(ii)] $T_1\leq_{HR} T_2$ if $R\geq S$, $R'\geq S'$, and $\phi_2/\phi_1$ and { $-\phi'_1/\phi_1$  (or $-\phi'_2/\phi_2$)} are increasing.
\item[(iii)] $T_1\leq_{RHR} T_2$ if $R\geq S$, $R'\leq S'$, $(1-\phi_2)/(1-\phi_1)$ {is increasing and $-\phi'_1/(1-\phi_1)$ (or $-\phi'_2/(1-\phi_2)$) is decreasing.}
\item[(iv)] $T_1\leq_{LR} T_2$ if $R\geq S$, $R'\geq S'$ and $S'/R'$ is increasing, and $\phi'_2/\phi'_1$ is increasing {and $-\phi''_1/\phi'_1$ (or $-\phi''_2/\phi'_2$) is nonnegative and  increasing}.
\end{itemize}
	\end{corollary}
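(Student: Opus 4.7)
The strategy is transitivity via an intermediate TTE system. First I would introduce $T_3=\psi(Z_1,\dots,Z_n)$ with $(Z_1,\dots,Z_n)\sim\textup{TTE}(W_1,S,\dots,S)$, so that Corollary \ref{Corol-ID} gives $\overline{F}_{T_3}(t)=\phi_1(S(t))$. This $T_3$ shares the generator $W_1$ with $T_1$, so the proposition immediately preceding the corollary (which compares systems with common $W$ and different $R,S$) applies to the pair $(T_1,T_3)$; and it shares the transformation $S$ with $T_2$, so Proposition \ref{Pmain} (which compares systems with common $R$ and different $W_1,W_2$) applies to the pair $(T_3,T_2)$. I would then invoke transitivity of each stochastic order to chain $T_1\leq_* T_3\leq_* T_2$ into $T_1\leq_* T_2$.

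For part (i), the previous proposition gives $T_1\leq_{ST} T_3$ iff $R\geq S$, and Proposition \ref{Pmain} gives $T_3\leq_{ST} T_2$ iff $\phi_1\leq\phi_2$; transitivity of $\leq_{ST}$ then yields the sufficient direction of the stated equivalence (the necessity can be recovered by specialising to $R=S$ and to $\phi_1=\phi_2$ separately). Parts (ii)--(iv) follow exactly the same template: the previous proposition contributes $R\geq S$, the appropriate sign of $R'-S'$, and the monotonicity of $-\phi_1'/\phi_1$ (for HR), $-\phi_1'/(1-\phi_1)$ (for RHR), or $-\phi_1''/\phi_1'$ (for LR), while Proposition \ref{Pmain} contributes the monotonicity of $\phi_2/\phi_1$, $(1-\phi_2)/(1-\phi_1)$, or $\phi_2'/\phi_1'$, respectively. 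Assembling the two halves yields the $\phi_1$-version of each clause in the corollary.

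To obtain the ``or $\phi_2$'' alternative, I would repeat the argument with the other natural interpolant, namely $T_3'\sim\textup{TTE}(W_2,R,\dots,R)$, whose reliability function equals $\phi_2(R(t))$. In this chain Proposition \ref{Pmain} is applied first to $(T_1,T_3')$ and the previous proposition second to $(T_3',T_2)$; the monotonicity hypothesis then appears with $\phi_2$ in place of $\phi_1$, producing the alternative form of each sufficient condition.

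I do not anticipate a genuine difficulty: the argument is essentially bookkeeping once the two intermediates are fixed. The only point to watch is that the conditions on $R,S,R',S'$ required by the common-$W$ proposition coincide with those stated in the corollary, and that Proposition \ref{Pmain} contributes no additional conditions on $R$ or $S$ (which it does not, since it is formulated entirely at the level of $\phi_1,\phi_2$). Hence transitivity carries through without introducing any hidden hypotheses, and the four parts of the corollary follow at once.
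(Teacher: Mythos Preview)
Your approach is exactly the one the paper intends: it states the result as a corollary ``by combining the two preceding propositions'' and gives no further proof, and your two-interpolant transitivity argument (via $T_3\sim\textup{TTE}(W_1,S,\dots,S)$ for the $\phi_1$-versions and $T_3'\sim\textup{TTE}(W_2,R,\dots,R)$ for the $\phi_2$-versions) is precisely how those propositions combine.

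One caveat worth flagging: your parenthetical about recovering the ``only if'' direction in (i) by ``specialising to $R=S$ and to $\phi_1=\phi_2$ separately'' does not actually work, since $R,S,W_1,W_2$ are fixed data in the corollary and cannot be varied after the fact. In fact the necessity claim in (i) is doubtful as stated (one can have $\phi_1(R(t))\le\phi_2(S(t))$ for all $t$ without $R\ge S$ and $\phi_1\le\phi_2$ holding separately); the two source propositions only deliver sufficiency when chained. This appears to be an imprecision in the paper's statement rather than something your proof should be expected to supply.
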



Finally, we study some particular cases of special interest. In the following proposition we compare a  single component with a series system with $n$ components. Note that we know that $X_{1:n}\leq_{ST}X_i$ for all $i=1,\dots,n$ and for any model (including TTE models). However, we need some conditions for the other orders given in the following propositon.

\begin{proposition}
	Let $T_1=X_{1:n}$  and $T_2=X_{i}$ for an $i\in\{1,\dots,n\}$, where  $(X_1,\dots,X_n)\sim\textup{TTE}(W,R_1,\dots,R_n)$. Then,
	\begin{itemize}
		\item[(i)] $T_1\leq_{HR} T_2$ if $W$ is log-concave.
		\item[(ii)] $T_1\leq_{LR} T_2$ if  $-W'$ is log-concave and   
		\begin{equation}\label{condLR}
		\frac{R''_i(t)}{R'_i(t)}\geq \frac 1{n-1}\sum_{j\neq i}\frac{R''_j(t)}{R'_j(t)}\text{ for all }t\geq 0.
		\end{equation}
	\end{itemize}
\end{proposition}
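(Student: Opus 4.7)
Both parts rest on the same computation. Under the TTE model, $\overline{F}_{T_2}(t)=\Pr(X_i>t)=W(R_i(t))$, while by \eqref{seriesR} the series lifetime satisfies $\overline{F}_{T_1}(t)=W(A(t))$ with $A(t):=R_1(t)+\dots+R_n(t)$; correspondingly $f_{T_2}(t)=-R'_i(t)W'(R_i(t))$ and $f_{T_1}(t)=-A'(t)W'(A(t))$. Since each $R_j$ is strictly increasing with $R_j(0)=0$, we have $A\ge R_i$ and $A'\ge R'_i\ge 0$ pointwise, and these inequalities drive both arguments.

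For part (i), my plan is to show that $\overline{F}_{T_2}/\overline{F}_{T_1}=W(R_i)/W(A)$ is increasing, which is exactly $T_1\le_{HR}T_2$. Introducing the hazard rate $\eta(x):=-W'(x)/W(x)\ge 0$ of $W$, the log-concavity of $W$ is equivalent to $\eta$ being increasing. Logarithmic differentiation of the ratio produces $\eta(A)A'-\eta(R_i)R'_i$, and the two-step chain $\eta(A)A'\ge \eta(R_i)A'\ge \eta(R_i)R'_i$ (first step from $A\ge R_i$ with $\eta$ increasing, second from $A'\ge R'_i\ge 0$) closes the argument. This is essentially a specialization of the calculation in Proposition \ref{series1}(ii) to the case where the comparison is against a single-component marginal.

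For part (ii), my plan is to show $f_{T_2}/f_{T_1}$ is increasing. Letting $\psi(x):=-W''(x)/W'(x)\ge 0$, the log-concavity of $-W'$ is equivalent to $\psi$ being increasing. Logarithmic differentiation of the density ratio yields
\[
\psi(A)A'-\psi(R_i)R'_i+\frac{R''_i}{R'_i}-\frac{A''}{A'}.
\]
The block $\psi(A)A'-\psi(R_i)R'_i$ is nonnegative by the same monotonicity chain as in (i) applied to $\psi$ in place of $\eta$. The remaining block $R''_i/R'_i-A''/A'$ is where condition \eqref{condLR} enters the picture.

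Direct algebra gives
\[
\frac{R''_i}{R'_i}-\frac{A''}{A'}=\frac{R''_iA'-R'_iA''}{R'_iA'}=\frac{1}{A'}\sum_{j\neq i}R'_j\left(\frac{R''_i}{R'_i}-\frac{R''_j}{R'_j}\right),
\]
and the plan is to use \eqref{condLR} to force this sum to be nonnegative. The main obstacle is that \eqref{condLR} bounds the \emph{simple} average of $R''_j/R'_j$ over $j\neq i$, whereas the expression above weights those terms by $R'_j$; bridging this gap (or, if necessary, absorbing the shortfall into the already-positive $\psi$-block) is the delicate step of the proof.
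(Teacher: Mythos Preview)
Your argument for part~(i) is correct and is exactly the paper's: differentiate the ratio $W(R_i)/W(A)$, use that $-W'/W$ is nonnegative and increasing under log-concavity of $W$, and combine with $A\ge R_i$, $A'\ge R'_i$.

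For part~(ii) your decomposition is also equivalent to the paper's. The paper writes the density ratio as the product $\alpha(t)\beta(t)$ with $\alpha=W'(R_i)/W'(A)$ and $\beta=R'_i/A'$, and argues separately that $\alpha$ is increasing (from log-concavity of $-W'$, just as you do with your $\psi$-block) and that $\beta$ is increasing. Your logarithmic differentiation yields precisely $\alpha'/\alpha+\beta'/\beta$, so the two routes are the same.

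The gap you flag is real, and the paper does not close it either. What is needed for $\beta'\ge 0$ is
\[
\frac{R''_i}{R'_i}\;\ge\;\frac{\sum_{j\neq i}R''_j}{\sum_{j\neq i}R'_j},
\]
i.e.\ a bound against the $R'_j$-\emph{weighted} mean of the $R''_j/R'_j$, whereas condition~\eqref{condLR} bounds only the \emph{arithmetic} mean. These do not imply one another in general (take $n=3$, $R'_2=1$, $R''_2=0$, $R'_3$ large with $R''_3/R'_3=2$: the arithmetic mean is $1$ but the weighted mean is close to $2$). The paper simply writes ``$\ge 0$ when \eqref{condLR} holds'' at this step without further argument, so you have reproduced the paper's proof faithfully, including its unjustified step. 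Your proposed fallback of absorbing the deficit into the $\psi$-block is not a repair: that block can be made small independently of the $\beta$-deficit, so no uniform absorption is available. In short, your proof matches the paper's, and your reservation about \eqref{condLR} is a correct observation about a gap present in the paper itself.
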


\begin{proof}
To prove (i) we note that,  from \eqref{seriesR}, $T_1\leq_{HR} T_2$ holds iff
$$g(t):=\frac{W(R_i(t))}{W(\sum_{j=1}^nR_i(t))}$$
is increasing in $t$. Differentiating we have
\begin{align*}
g'(t)&=_{sign}\frac{W'(R_i(t))}{W(R_i(t))}R'_i(t)-
\frac
{W'(\sum_{j=1}^nR_j(t))}{W(\sum_{j=1}^nR_j(t))}\left(\sum_{j=1}^nR'_j(t)\right), 
\end{align*}
where $R'_j\geq 0$ for  $j=1,\dots,n$. 
Hence, if $W$ is log-concave, then $-W'/W$ is nonnegative and increasing. So, $g'$ is non-negative (since $R_i\leq \sum_{j=1}^nR_j$) and we prove the stated result.

\quad

Analogously, to prove $(ii)$,  we use that, from \eqref{seriesf}, $T_1\leq_{LR} T_2$ holds iff
$$\frac{W'(R_i(t))}{W'(\sum_{j=1}^nR_j(t))}\cdot \frac{R'_i(t)}{\sum_{j=1}^nR'_j(t)}$$
is increasing in $t$.
The first fraction
$$\alpha(t):=\frac{W'(R_i(t))}{W'(\sum_{j=1}^nR_j(t))}$$
 is increasing iff
$$\alpha'(t)=\frac{W''(R_i(t))}{W'(R_i(t))}R_i'(t)-\frac {W''(\sum_{j=1}^nR_j(t))}{W'(\sum_{j=1}^nR_j(t))}\left(\sum_{j=1}^nR'_j(t)\right)\geq 0.$$
As $W$ is decreasing and  convex, $W'\leq 0$ and $W''\geq 0$.
Moreover  $R_1,\dots,R_n$ are nonnegative and increasing.  Then, if $-W'$ is log-concave, then $-W''/W'$ is nonnegative and increasing and so
$$    -\frac {W''(\sum_{j=1}^nR_j(t))}{W'(\sum_{j=1}^nR_j(t))}\left(\sum_{j=1}^nR'_j(t)\right)\geq \frac{W''(R_i(t))}{W'(R_i(t))}R_i'(t) \geq 0$$
for all $t\geq 0$ and $\alpha$ is increasing. 

The second fraction is
$$\beta(t):=\frac{R'_i(t)}{\sum_{j=1}^nR'_j(t)}\geq 0.$$
Differentiating we have
$$\beta'(t)=_{sign}R''_i(t)\sum_{j=1}^nR'_j(t)-R'_i(t)\sum_{j=1}^nR''_i(t)=\sum_{j\neq i}[R''_i(t)R'_j(t)-R''_j(t)R'_i(t)]\geq 0$$
when \eqref{condLR} holds. Hence both fractions are nonegative and  increasing and we have the LR-ordering.
\end{proof}

Again note that $W$ is log-concave iff the reliability function $W$ is IFR.
Also note that \eqref{condLR} holds when $R_1=\dots=R_n$.
Note that if we consider the log-convex function $W(x)=(1+x)^{-1}$ (Clayton-Oakes copula) and the functions
	\[
	R_1(t)=e^{2 t}-1\]
and
\[R_2(t)=e^{t}-1 \]
for $t\geq 0$,
then $X_{1:2}\not\leq_{HR} X_{1}$. However, it holds $X_{1:2}\leq_{HR} X_{2}$. In Figure \ref{HazardratesProp49a} (left) we can see the corresponding hazard rates for the log-convex function $W(x)=(1+x)^{-1}$, while in Figure \ref{HazardratesProp49a} (right) we show them for the log-concave function $W(x)=\exp(2(1-e^x))$ (which corresponds to the Gumbel-Barnett copula). In this last case we have $X_{1:2}\leq_{HR} X_1\leq_{HR} X_2$.
	
\begin{figure}
\includegraphics[width=\textwidth]{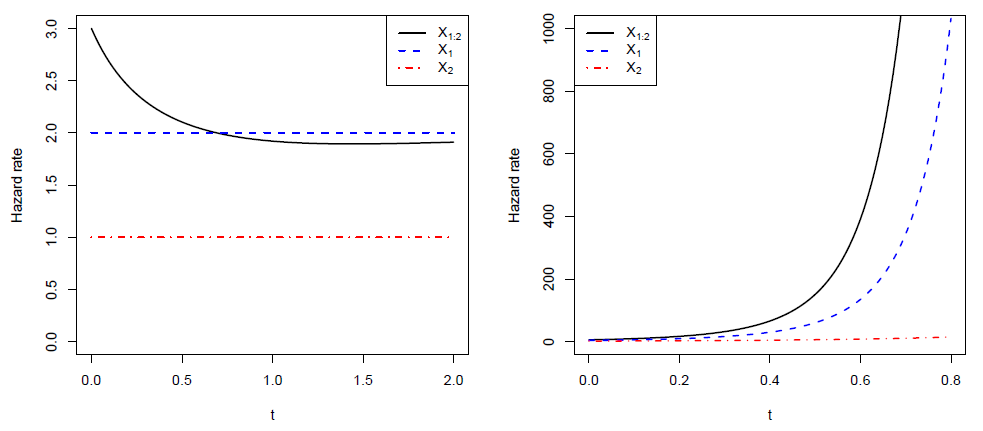}
\caption{Hazard rate functions of $X_{1:2}$, $X_1$ and $X_2$ for $R_1(t)=e^{2 t}-1$ and $R_2(t)=e^{t}-1$, $W(t)=(1+t)^{-1}$  (left) and $W(t)=\exp((1-e^t)/0.5)$ (right) for $t\geq 0$.}\label{HazardratesProp49a}
\end{figure}


In the following proposition we compare series and parallel systems with two components satisfying the same TTE model. Note that  $X_{1:2}\leq_{ST}X_{2:2}$ holds for any model.

\begin{proposition}
	Let $T_1=X_{1:2}$  and $T_2=X_{2:2}$ where  $(X_1,X_2)\sim\textup{TTE}(W,R_1,R_2)$. Then,
	$T_1\leq_{HR} T_2$ for all $R_1,R_2$ if  $W$ is log-concave.
\end{proposition}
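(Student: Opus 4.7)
The HR ordering $T_1 \leq_{HR} T_2$ is equivalent, by Definition \ref{storders}(ii), to the ratio $\overline{F}_{T_2}/\overline{F}_{T_1}$ being increasing in $t$. Using expressions \eqref{seriesR} and \eqref{parallelR}, the plan is to decompose this ratio in a way that lets us recycle the previous proposition (series versus single component). Concretely,
\begin{equation*}
\frac{\overline{F}_{T_2}(t)}{\overline{F}_{T_1}(t)}=\frac{W(R_1(t))+W(R_2(t))-W(R_1(t)+R_2(t))}{W(R_1(t)+R_2(t))}=\frac{W(R_1(t))}{W(R_1(t)+R_2(t))}+\frac{W(R_2(t))}{W(R_1(t)+R_2(t))}-1.
\end{equation*}
Since an additive constant is irrelevant to monotonicity, it suffices to show that each of the two fractions on the right is increasing in $t$.

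Now I would observe that $W(R_i(t))$ is the marginal reliability of $X_i$ and $W(R_1(t)+R_2(t))$ is the reliability of the series system $X_{1:2}$. Hence
\begin{equation*}
\frac{W(R_i(t))}{W(R_1(t)+R_2(t))}=\frac{\overline{F}_{X_i}(t)}{\overline{F}_{X_{1:2}}(t)}
\end{equation*}
is increasing in $t$ precisely when $X_{1:2}\leq_{HR}X_i$. The preceding proposition, applied with $n=2$, gives exactly this conclusion under the sole assumption that $W$ is log-concave. Applying it once for $i=1$ and once for $i=2$ shows both fractions are increasing, so their sum (and hence $\overline{F}_{T_2}/\overline{F}_{T_1}$) is increasing, yielding $T_1\leq_{HR}T_2$.

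The proof therefore reduces essentially to the already-proved series-versus-component result, so there is no substantive obstacle beyond spotting the additive decomposition of the ratio. The only routine check is that the decomposition is legitimate on the whole time axis: since $W$ is strictly positive on $[0,\infty)$, the denominator $W(R_1(t)+R_2(t))$ never vanishes and all quantities involved are well-defined.
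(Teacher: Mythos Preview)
Your proof is correct and, at its core, coincides with the paper's: both reduce the HR ordering to the monotonicity of
\[
g(t)=\frac{W(R_1(t))+W(R_2(t))}{W(R_1(t)+R_2(t))}.
\]
The only difference is packaging. The paper differentiates $g$ directly, groups the derivative by the factors $R_1'(t)$ and $R_2'(t)$, and checks each coefficient is nonnegative using log-concavity of $W$---which is exactly the computation already performed in the proof of the previous proposition (series versus single component). You instead split $g$ as the sum of the two ratios $W(R_i(t))/W(R_1(t)+R_2(t))$ and invoke that proposition twice. Your route is slightly more economical since it avoids repeating a calculation already on record; the paper's direct argument is self-contained but duplicates work.
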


\begin{proof}
The HR ordering $T_1\leq_{HR} T_2$ holds iff 
\[
\frac{ W(R_1(t))+W(R_2(t))-W(R_1(t)+R_2(t))}{W(R_1(t)+R_2(t))}
\]
is increasing, that is, iff the function 
\[
 g(t):=\frac{W(R_1(t))+W(R_2(t))}{W(R_1(t)+R_2(t))}
\]
is increasing. Differentiating, we get
\begin{align*}
g'(t)&=_{sign}
[W'(R_1(t))R'_1(t)+W'(R_2(t))R'_2(t)]
W(R_1(t)+R_2(t))\\
&\quad -[W(R_1(t))+W(R_2(t))] W'(R_1(t)+R_2(t))[R'_1(t)+R'_2(t)]\\
&=R'_1(t)[W'(R_1(t))W(R_1(t)+R_2(t))-[W(R_1(t))+W(R_2(t))] W'(R_1(t)+R_2(t))]\\
&\quad +R'_2(t)[W'(R_2(t))W(R_1(t)+R_2(t))-[W(R_1(t))+W(R_2(t))] W'(R_1(t)+R_2(t))].
\end{align*}
If $W$ is  log-concave (i.e. $W'/W$ is decreasing), we have 
$$\frac{W'(R_i(t))}{W(R_i(t))}\geq \frac{W'(R_1(t)+R_2(t))}{W(R_1(t)+R_2(t))}$$
for $i=1,2$ and for all $t$.
Moreover, as $W$ is non-negative and decreasing, we have
$$-W(R_i(t))W'(R_1(t)+R_2(t))\geq 0$$
for $i=1,2$ and for all $t$.
Therefore 
$g'(t)\geq 0$ for all $t$ and the proof is completed.
\end{proof}

Again the condition for the HR ordering in the preceding proposition holds when $W$ is IFR. For example, it holds when the components are independent, that is, when $W(x)=e^{-x}$ for $x\geq 0$ (a well known property). However, it does not hold in the Frailty model when $\Theta$ has a Gamma distribution (i.e. when $K$ is a Clayton-Oakes copula) since $W(x)=b^a/(b+x)^a$ is strictly DFR (Pareto reliability). In this case the series and parallel systems are not always HR ordered. For example, if we consider $W(x)=(1+3x)^{-1/3}$, $R_1(x)=(e^x-1)/10$ and $R_2(x)=x$ for $x\geq 0$, the previous proposition cannot be applied and, as can be seen in Figure \ref{HazardratesProp412}, the HR ordering does not hold.

\begin{figure}
	\centering
	\includegraphics[width=0.5\textwidth]{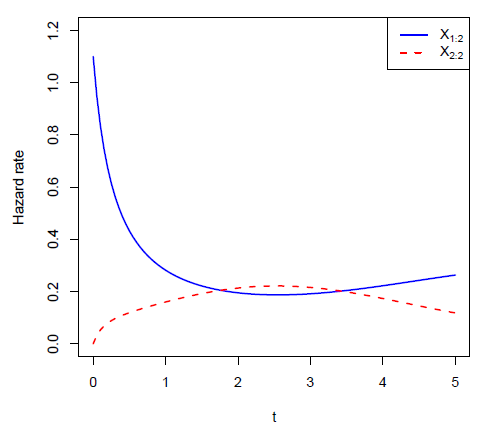}
	\caption{Hazard rates of $X_{1:2}$ and $X_{2:2}$ for $W(x)=(1+3t)^{-1/3}$, $R_1(t)=(e^t-1)/10$ and $R_2(t)=t$ for $t\geq 0$.}\label{HazardratesProp412}
\end{figure}

As a particular case of Proposition \ref{Pmain} we obtain the following result to compare series and parallel systems from the same model and a common $R$ function.

\begin{proposition}
	Let $T_1=X_{1:2}$  and $T_2=X_{2:2}$ where  $(X_1,X_2)\sim\textup{TTE}(W,R,R)$. Then,
\begin{itemize}
\item[(i)] $T_1\leq_{ST} T_2$ for all $R$ and all $W$.  
\item[(ii)] $T_1\leq_{HR} T_2$  for all $R$ iff  $W(x)/W(2x)$ is increasing.
\item[(iii)] $T_1\leq_{RHR} T_2$  for all $R$ iff $\ov W(x)/\ov W(2x)$ is increasing, where $\ov W=1-W$.
\item[(iv)] $T_1\leq_{LR} T_2$  for all $R$ iff $W'(x)/W'(2x)$ is increasing.
\end{itemize}
\end{proposition}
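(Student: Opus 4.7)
The plan is to recognize this as a direct specialization of Proposition \ref{Pmain}. Under the TTE$(W,R,R)$ model, both the series and parallel systems with two ID components fall into the setup of Corollary \ref{Corol-ID}, so their reliability functions have the form $\phi_i(R(t))$ for explicit functions $\phi_i$, and Proposition \ref{Pmain} tells me exactly when the four stochastic orders hold for all $R$.

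First I would compute the two $\phi$'s. The series system $X_{1:2}$ has a single minimal path set $\{1,2\}$, so from Corollary \ref{Corol-ID} we get $\phi_1(x)=W(2x)$. The parallel system $X_{2:2}$ has minimal path sets $\{1\}$ and $\{2\}$, so $\phi_2(x)=2W(x)-W(2x)$. With these in hand, each part of the proposition reduces to a one-line check.

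For (i), Proposition \ref{Pmain}(i) gives $T_1\leq_{ST}T_2$ iff $\phi_1\leq\phi_2$, that is, $W(2x)\leq 2W(x)-W(2x)$, equivalently $W(2x)\leq W(x)$, which always holds since $W$ is decreasing. For (ii), Proposition \ref{Pmain}(ii) requires $\phi_2/\phi_1$ to be increasing; writing $\phi_2(x)/\phi_1(x)=2W(x)/W(2x)-1$ shows this is equivalent to $W(x)/W(2x)$ being increasing. For (iv), Proposition \ref{Pmain}(iv) requires $\phi_2'/\phi_1'$ to be increasing; differentiating gives $\phi_1'(x)=2W'(2x)$ and $\phi_2'(x)=2W'(x)-2W'(2x)$, hence $\phi_2'(x)/\phi_1'(x)=W'(x)/W'(2x)-1$, which is increasing iff $W'(x)/W'(2x)$ is.

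The only step needing a little care is (iii). Proposition \ref{Pmain}(iii) asks that $(1-\phi_2)/(1-\phi_1)$ be increasing. The key algebraic observation is that $1-\phi_2(x)=1-2W(x)+W(2x)=2\bar W(x)-\bar W(2x)$ while $1-\phi_1(x)=\bar W(2x)$, so the ratio becomes $2\bar W(x)/\bar W(2x)-1$, monotone iff $\bar W(x)/\bar W(2x)$ is. That identity for $1-\phi_2$ is the only non-routine bit; everything else is bookkeeping on top of Proposition \ref{Pmain}. I do not anticipate any genuine obstacle.
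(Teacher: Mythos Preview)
Your proposal is correct and follows exactly the paper's own approach: the paper simply notes that $\phi_1(x)=W(2x)$ and $\phi_2(x)=2W(x)-W(2x)$ and then invokes Proposition~\ref{Pmain}. Your write-up merely spells out the algebraic reductions that the paper leaves implicit.
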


\begin{proof}
From \eqref{seriesR} and \eqref{parallelR}, we have 
$\phi_1(x)=W(2x)$ and $\phi_2(x)=2W(x)-W(2x)$. Then the proof follows from Proposition \ref{Pmain}.
\end{proof}

Note that  $W(x)/W(2x)$ is increasing if $W$ is log-concave, but the reverse is not true.

Similar conditions can be given to compare two other coherent systems with specific structures and components satisfying TTE models with the same $R$ function. Let us see an example.

\begin{example}\label{Exairplane}
Let us consider an aircraft with four engines, two per wing. The aircraft can fly when, at least, an engine works on each wing. If $X_1$, $X_2$, $X_3$, $X_4$ are the lifetimes of the engines, then the lifetime of the engine's system in this aircraft is
\[
T = \min\{\max(X_1, X_2), \max(X_3, X_4)\}.
\]
We assume that the component lifetimes are identically distributed. However, given that the components share the same environment, let us assume that their lifetimes are dependent and that this dependence is modelled by a Time Transformed Exponential model $TTE(W,R,R,R,R)$.  Then the system reliability is
\begin{align*}
	\Pr(T>t)&=\Pr(\min\{\max(X_1, X_2), \max(X_3, X_4)\}>t)\\
&= 4W(2R(t))-4W(3R(t))+W(4R(t))\\
	&= \phi(R(t)),
\end{align*}
where 
\begin{equation}\label{airplane}
\phi(x)=4W(2x)-4W(3x)+W(4x).
\end{equation}
An interesting situation is when the TTE model is derived from a frailty model, where $W(x)=E[\exp(-\Theta x)]$ for an environmental random variable $\Theta$, i.e., $W$ is the Laplace transform of the so-called frailty $\Theta$. Many distributions can be chosen for the frailty distribution. One of the most common ones is the gamma distribution whose Laplace transform is given by
\[
W(x)=\frac{b^{a}}{(b+x)^{a}},\text{ for all }x\geq 0,
\]
for $a,b>0$. 
Let us consider now two different dependence structures for the component lifetimes given by two time transformed exponential models $TTE(W_1,R,R,R,R)$ and $TTE(W_2,R,R,R,R)$, where $W_i$, $i=1,2$, is given by the Laplace transform of two different gamma distributions. Let, for example, $W_1(x)=1/(1+x)$ ($a_1=1,b_1=1$) and $W_2(x)=3/(3+x)$ ($a_2=1,b_2=3$) and let $\phi_1$ and $\phi_2$ be the corresponding functions obtained from  \eqref{airplane}. In Figure \ref{airplane_plot_fi} we can see that $\phi_1\leq\phi_2$ and that $\phi_2/\phi_1$ is increasing in $(0,\infty)$. Under these conditions, from Proposition \ref{Pmain}, we can assure that $T_1\leq_{ST} T_2$ and $T_1\leq_{HR}T_2$ hold for all $R$.
\end{example}

\begin{figure}
	\centering
	\includegraphics[width=\textwidth]{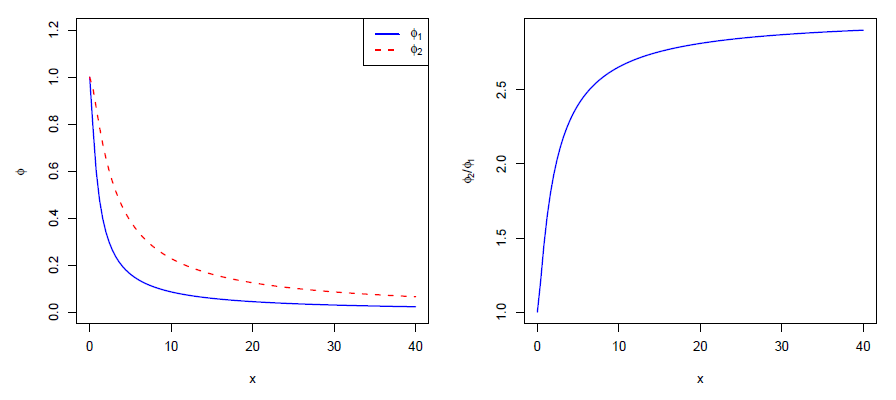}
	\caption{Plots of $\phi_1$ and $\phi_2$ functions  (left) and ratio $\phi_2/\phi_1$ (right)  for the systems in Example \ref{Exairplane}.}\label{airplane_plot_fi}
\end{figure}

\section{Residual lifetimes}

In this section we study the system residual lifetime when the components satisfy the TTE model. 
It is well known that the reliability functions $\ov F_{i,t}$ of the component residual lifetimes $(X_i)_t = (X_i - t |X_i > t)$
can be written as $\ov F_{i,t}(x) =\ov  F_i(t +x)/\ov F_i(t)$, for $i = 1,\dots, n$, where we assume
$\ov F_i (t) > 0$. If the components are identically distributed (ID), then the common reliability function of $(X_i)_t$,
$i = 1,\dots,n$, will be represented simply as $\ov  F_t(x) =\ov  F(t + x)/\ov F(t)$, where we assume
$\ov  F(t) > 0$.

As in \cite{N18b}, if $T$ is the system lifetime, then we can consider two different residual lifetimes. The first one is the usual residual lifetime $T_t=(T-t|T>t)$ in which we just assume that, at time $t$, the system is working (i.e. $T>t$). The second one is called the residual lifetime at the system level and is defined by $T^*_t=(T-t|X_{1:n}>t)$. Here we assume that, at time $t$, all the components in the system are working (i.e. $X_{1:n}>t$). Note that both options coincide  when we consider the series system with $n$ components (i.e. when $T=X_{1:n}$).  However, for other system structures they are different and one may expect that $T^*_t$ is greater than $T_t$. This is true for the stochastic order when the components are independent but it is not always the case when we use other orders and/or the components are dependent \cite[see][]{N18b}.

The reliability function of $T_t$ can be obtained as
$$\overline{F}_{T_t}(x)=\Pr(T-t>x|T>t)=\frac{\overline{F}_T(x+t)}{\overline{F}_T(t)},$$
where $\overline{F}_T$ is given by \eqref{RT} under a TTE model. Analogously, the reliability function of  $T^*_t$  under a TTE model can be written as
$$\overline{F}_{T^*_t}(x)=\Pr(T-t>x|X_{1:n}>t)=\frac{\Pr(T>x+t, X_{1:n}>t)}{W(R_1(t)+\dots+R_n(t))},$$
where $\Pr(T>x+t, X_{1:n}>t)$ can be computed as $\overline{F}_T$ in  \eqref{RT}. Let us see some examples.

\begin{itemize}
\item[(i)] For a single component $X_i$, we have
$$\overline F_{i,t}(x) := \Pr(X_i-t>x|X_i>t)=\frac{\overline F_{i}(x+t)}{\overline F_{i}(t)}=\frac{W\left(R_i(x+t)\right)}{W\left(R_i(t)\right)}$$
and
\begin{align*}
\overline F^*_{i,t}(x) &:= \Pr(X_i-t>x|X_{1:n}>t)=
 \frac{W\left(R_i(x+t)+\sum_{j\neq i}R_j(t)\right)}{W(R_1(t)+\dots+R_n(t))}.
\end{align*}
\item[(ii)] 
 For the series system with components in $P$, that is, $X_P=\min_{j\in P}X_j$, we have
$$\overline F_{P,t}(x) := \Pr(X_P-t>x|X_P>t)=\frac{\overline F_{P}(x+t)}{\overline F_{P}(t)}=\frac{W\left(\sum_{j\in P}R_j(x+t)\right)}{W\left(\sum_{j\in P}R_j(t)\right)}$$
and
\begin{align*}
\overline F^*_{P,t}(x) &:= \Pr(X_P-t>x|X_{1:n}>t)=
\frac{W\left(\sum_{j\in P}R_j(x+t)+\sum_{j\notin P} R_j(t)\right)}{W(R_1(t)+\dots+R_n(t))}.
\end{align*}
In particular, for the series system with $n$ components, both options coincide and  we have 
$$			
\overline F_{1:n,t}(x):= \Pr(X_{1:n}-t>x|X_{1:n}>t)
=\frac{W(R_1(x+t)+\dots+ R_n(x+t))}{W(R_1(t)+\dots+R_n(t))}.
$$		
\item[(iii)] For a parallel system with two components we have 
\begin{align*}
\overline{F}_{2:2,t}(x):&=\Pr(X_{2:2}-t|X_{2:2}>t)\\
&=\frac{W(R_1(x+t))+W(R_2(x+t))-W(R_1(x+t)+R_2(x+t))}{W(R_1(t))+W(R_2(t))-W(R_1(t)+R_2(t))}
\end{align*}
and
\begin{align*}
\overline F^*_{2:2,t}(x) :&= \Pr(X_{2:2}-t>x|X_{1:2}>t)\\
&=\frac{W(R_1(x+t)+R_2(t))+W(R_1(t)+R_2(x+t))-W(R_1(x+t)+R_2(x+t))}{W(R_1(t)+R_2(t))}.
		\end{align*}
	\end{itemize}

Let us give some comparison results. In the first one we compare the residual lifetimes of series systems.

\begin{proposition}
If $(X_1,\dots,X_n)\sim TTE(W,R_1,\dots,R_n)$, $T=X_P$ and $W$ satisfies 
\begin{equation}\label{TP2}
W(c)W(a+b)-W(a)W(c+b)\geq 0 \quad (\leq 0) \text{ for all }a\geq c\geq 0,b\geq 0,
\end{equation}
then $T_{t}\leq_{ST}T^*_{t}$ ($\geq_{ST}$) for all $P\subset \{1,\dots,n\}$. 
\end{proposition}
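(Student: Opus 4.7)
The plan is to reduce the stochastic comparison to a pointwise inequality between the two reliability functions and recognize this inequality as exactly the hypothesis \eqref{TP2} after an appropriate change of variables.

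First I would recall from example (ii) of Section~5 that, for the series subsystem $T=X_P$,
\begin{equation*}
\overline F_{P,t}(x)=\frac{W\bigl(\sum_{j\in P}R_j(x+t)\bigr)}{W\bigl(\sum_{j\in P}R_j(t)\bigr)},\qquad
\overline F^{\,*}_{P,t}(x)=\frac{W\bigl(\sum_{j\in P}R_j(x+t)+\sum_{j\notin P}R_j(t)\bigr)}{W\bigl(\sum_{j=1}^{n}R_j(t)\bigr)}.
\end{equation*}
By definition of the stochastic order, $T_t\le_{ST}T^{*}_t$ iff $\overline F_{P,t}(x)\le \overline F^{\,*}_{P,t}(x)$ for every $x\ge 0$. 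Cross-multiplying (all quantities are strictly positive because $W>0$), this is equivalent to
\begin{equation*}
W\!\bigl(\textstyle\sum_{j\in P}R_j(x+t)\bigr)\,W\!\bigl(\textstyle\sum_{j=1}^{n}R_j(t)\bigr)\;\le\; W\!\bigl(\textstyle\sum_{j\in P}R_j(t)\bigr)\,W\!\bigl(\textstyle\sum_{j\in P}R_j(x+t)+\sum_{j\notin P}R_j(t)\bigr).
\end{equation*}

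Next, I would introduce the substitutions
\begin{equation*}
a:=\sum_{j\in P}R_j(x+t),\qquad c:=\sum_{j\in P}R_j(t),\qquad b:=\sum_{j\notin P}R_j(t),
\end{equation*}
so that $\sum_{j=1}^n R_j(t)=c+b$ and the second argument on the right equals $a+b$. Because each $R_j$ is nonnegative, increasing, and $x\ge 0$, we have $a\ge c\ge 0$ and $b\ge 0$. The inequality above becomes exactly
\begin{equation*}
W(a)\,W(c+b)\;\le\; W(c)\,W(a+b),
\end{equation*}
which is the hypothesis \eqref{TP2}. This proves $T_t\le_{ST}T^{*}_t$. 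Reversing the inequality in \eqref{TP2} gives the reverse stochastic order $T_t\ge_{ST}T^{*}_t$ by the same chain of equivalences.

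Since the argument is a direct algebraic identification with the stated hypothesis, there is no real obstacle; the only nontrivial step is to verify $a\ge c\ge 0$, which uses only that $R_j$ is increasing with $R_j(0)=0$, and that the partition of the index set into $P$ and $\{1,\dots,n\}\setminus P$ yields the clean decomposition $\sum_{j=1}^n R_j(t)=c+b$. Conceptually, condition \eqref{TP2} is equivalent to $W$ being log-convex (respectively log-concave), i.e., the ratio $W(\cdot+b)/W(\cdot)$ being monotone in the first argument, which is the natural sufficient condition for the reliability information encoded in $\{X_{1:n}>t\}$ versus $\{X_P>t\}$ to increase (or decrease) the conditional survival of $X_P$.
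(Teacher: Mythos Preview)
Your proof is correct and follows essentially the same approach as the paper: both recall the explicit reliability formulas for $T_t$ and $T^{*}_t$, make the identical substitutions $a=\sum_{j\in P}R_j(x+t)$, $b=\sum_{j\notin P}R_j(t)$, $c=\sum_{j\in P}R_j(t)$, and recognize the resulting inequality as hypothesis~\eqref{TP2} after checking $a\ge c\ge 0$ and $b\ge 0$. Your closing remark that \eqref{TP2} amounts to log-convexity (resp.\ log-concavity) of $W$ matches the paper's subsequent observation that the condition is equivalent to $W$ being DFR (resp.\ IFR).
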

\begin{proof}
From the expressions given above this ordering holds iff 
$$\frac{W\left(\sum_{j\in P}R_j(x+t)\right)}{W\left(\sum_{j\in P}R_j(t)\right)}\leq 
\frac{W\left(\sum_{j\in P}R_j(x+t)+\sum_{j\notin P} R_j(t)\right)}{W(R_1(t)+\dots+R_n(t))}.$$
If we define
$$a:=\sum_{j\in P}R_j(x+t),$$
$$b:=\sum_{j\notin P} R_j(t)$$
and
$$c:=\sum_{j\in P}R_j(t),$$
this condition can be written as $W(c)W(a+b)-W(a)W(c+b)\geq 0$. As $R_j$ is nonnegative and  increasing, then we have $a\geq c\geq 0$ and $b\geq 0$. So this condition holds if $W$ satisfies \eqref{TP2}.
\end{proof}

\begin{remark}
In particular, the preceding proposition can be applied to $P=\{i\}$, that is, to a single component, when \eqref{TP2} holds. However, we do not need this condition when $P=\{1,\dots,n\}$ since $X_{1:n,t}=_{ST}X^*_{1:n,t}$.
Note that \eqref{TP2}   can be written as 
$$\frac{W(t+x)}{W(t)}\geq \frac{W(s+x)}{W(s)} \quad (\leq)  \text{ for all }0\leq s\leq t ,x\geq 0.$$
Thus, if we see $W$ as a reliability function, \eqref{TP2} is equivalent to the condition: $W$ is DFR (IFR) \cite[see, e.g.,][p. 15]{SS07}.
In particular it holds when the components are independent, i.e., when $W(x)=e^{-x}$ for $x\geq 0$. Actually, in this case we have $T_{t}=_{ST}T^*_{t}$  for all $P\subset \{1,\dots,n\}$ (as expected). If, for example, $W(x)=b^a(b+x)^{-a}$  for $x\geq 0$ and  $a,b>0$ (Clayton-Oakes copula), then $T_{t}\leq_{ST}T^*_{t}$ holds for all $t$ since $W$ is DFR.     Also note that we may obtain the reverse inequality $T_{t}\geq_{ST}T^*_{t}$ when $W$ is IFR. Let us see an example.
\end{remark}

\begin{example}\label{Ex5.3}
Let us consider $W(x)=\exp\left(\frac{1-e^x}{\theta}\right)$ for $x\geq 0$ with $\theta\in(0,1]$ which corresponds to the Gumbel-Barnett copula. It can be seen that $W$ is IFR (and convex).
If $(X_1,X_2,X_3)\sim\textup{TTE}(W,R_1,R_2,R_3)$ and we consider the series system $X_P$ with $P=\{1,2\}$, the respective reliability functions for the different system residual lifetimes are
$$\overline F_{P,t}(x) = \frac{W\left(R_1(x+t)+R_2(x+t)\right)}{W\left(R_1(t)+R_2(t)\right)}$$
and
$$\overline F^*_{P,t}(x) = \frac{W\left(R_1(x+t)+R_2(x+t)+R_3(t)\right)}{W\left(R_1(t)+R_2(t)+R_3(t)\right)}.$$
By plotting these reliability functions we can see if they are  ordered. For example, if $t=1$, $\theta=0.5$, $R_1(x)=(\exp(x)-1)/10$,   $R_2(x)=(\exp(x)-1)/5$, and  $R_3(x)=x$ for $x\geq 0$, 
we obtain the reliability functions  given in Figure \ref{figNew} (left). 	As expected we obtain that $T_{1}\geq_{ST}T^*_{1}$, that is, for this system the residual lifetime is greater if we just know that it is working at time $t=1$ than if we know that, at this time, all the components are working. Even more, as the ratio $\overline F^*_{P,1}/\overline F_{P,1}$ plotted in  Figure \ref{figNew} (left) is decreasing, we have $T_{1}\geq_{HR}T^*_{1}$.
\end{example}

\begin{figure}
	\centering
	\includegraphics[width=\textwidth]{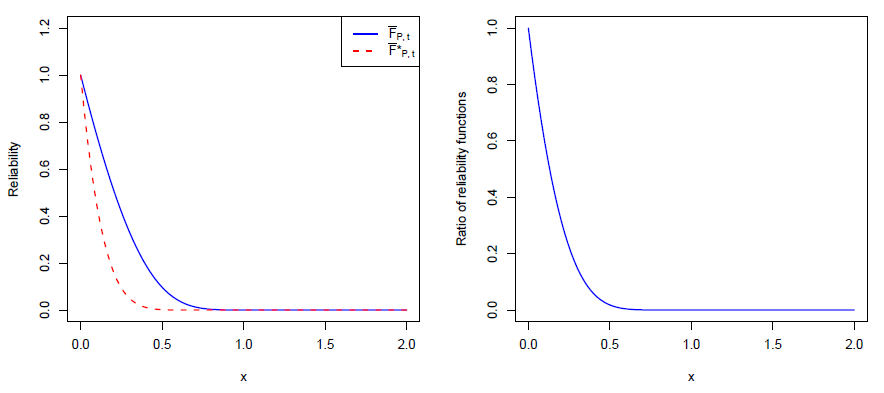}
	\caption{Reliability functions (left) and ratio $\overline F^*_{P,1}/\overline F_{P,1}$ (right) for the residual lifetimes of the system in Example \ref{Ex5.3}}\label{figNew}
\end{figure}

\section{Conclusions}

The present paper provide the basic results for the theoretical study of coherent systems with dependent components under a TTE model. This includes explicit expressions for the system reliability functions and tools to compare different systems and/or a system whose components have different dependence TTE models. The system residual lifetimes under this model are studied as well. 

This paper is just a first (basic) step. Clearly, the second one is to study how to apply these results in practice. Here, maybe, the main task will be how to estimate the distribution of the risk parameter $\theta$ which determines the frailty dependence model from  system lifetimes  and/or component lifetimes data. Moreover, we could study if the positive (or negative) dependency is good or bad for a specific system structure (as in Example \ref{Exairplane}).
The results included in the present paper are basic tools for those purposes.
 
\section*{Acknowledgements}


JN and JM  acknowledge the support received from Ministerio de Econom\'ia, Industria y
Competitividad of Spain under grant {\it MTM2016-79943-P} (AEI/ FEDER, UE).
JM also acknowledges the support received from the Conselleria d'Educaci\'o, Investigaci\'o, Cultura i Esport (Generalitat de la Comunitat Valenciana).

\end{document}